\newtheorem{thm}{Theorem}[section]
\newtheorem{lem}[thm]{Lemma}
\newtheorem{rem}{Remark}
\newtheorem{coro}[thm]{Corollary}
\newtheorem{defnm}{Definition}[section]
\newcommand{\nn}{\nonumber}
\newcommand{\la}{\langle}
\newcommand{\ra}{\rangle}
\newcommand{\al}{\alpha}
\newcommand{\nb}{\nabla}
\newcommand{\epa}{\varepsilon}
 \newcommand{\Norm}[1]{\left\Vert#1\right\Vert}
\numberwithin{equation}{section}
\begin{document}
\title{\bf\Large Geometric Schr\"odinger-Airy Flows on K\"ahler Manifolds}

\author{\bf Xiaowei Sun and Youde Wang\thanks{Supported
by NSFC, Grant No. 10990013}}

\date{}
\maketitle

\begin{abstract}
We define a class of geometric flows on a complete K\"ahler manifold
to unify some physical and mechanical models such as the motion
equations of vortex filament, complex-valued mKdV equations,
derivative nonlinear Schr\"odinger equations etc. Furthermore, we
consider the existence for these flows from $S^1$ into a complete
K\"ahler manifold and prove some local and global existence results.
\end{abstract}


\section{Introduction}
Let $(N, J, h)$ be a complete K\"ahler manifold with complex structure $J$
and metric $h$. In this paper, we first introduce a class of new
geometric flows from a circle $S^1$ or $\mathbb{R}$ into a complete
K\"ahler manifold $N$. We will see that these flows are of strong
physical and mechanical background and can be seen as the natural
generalization or extension of some physical and mechanical models,
for instance, the motion equations of vortex filament, complex-valued
mKdV equations, Hirota equations, Schr\"odinger-Airy equations, derivative
nonlinear Sch\"odinger equations etc. Therefore, to study these flows are
of important physical and geometric significance. On the other hand, we
may also provide some useful observation to these physical and mechanical
equations from the view point of geometry. By virtue of the geometric
observation we will employ some methods and techniques from geometric
analysis to approach the existence problems for these flows and want
to prove some results on the local and global existence for these geometric flows.

\subsection{The definition of Schr\"odinger-Airy flows and background}

For any smooth map $u(x,t)$ from $S^1\times\mathbb{R}$ into $(N,J,h)$,
Let $\nabla_x$ denote the covariant derivative $\nabla_{\frac{\partial}{\partial x}}$
on the pull-back bundle $u^{-1}TN$ induced from the Levi-Civita connection $\nabla$ on
$N$. For the sake of convenience, we always denote $\nabla_xu$ and $\nabla_tu$ by
$u_x$ and $u_t$ respectively. The energy of a smooth map $v: S^1 \rightarrow N$ is
defined as
$$E_1(v) \equiv \frac{1}{2} \int_{S^1}|v_x|^2dx.$$
And the tension field of $v$ is written by $\tau(v)\equiv\nabla_xv_x$.

For the maps from a unit circle $S^1$ or a real line $\mathbb{R}$
into $N$, we define a class of geometric flows, which we would like
to call \textbf{geometric Schr\"odinger-Airy flow}, as follows:
\begin{eqnarray}\label{eq:1.1}
\frac{\partial u}{\partial t} =\al J_u\nb_xu_x+\beta\left( \nabla_x^2 u_x
+{1\over2}R(u_x,J_uu_x)J_uu_x\right)+\gamma |u_x|^2u_x,
\end{eqnarray}
where $\al$, $\beta$ and $\gamma$ are real constants, $R$ is the Riemannian
curvature tensor on $N$ and $J_u\equiv J(u)$.

If $(N, J, h)$ is a locally Hermitian symmetric space, the geometric flow is
an energy conserved system. Moreover, if $\gamma=0$ it also preserves the
following ``pseudo-helicity" quantity
 $$E_2(u)\equiv\int h( \nabla_xu_x,Ju_x) dx.$$

In the case $\alpha=1$ and $\beta=\gamma=0$, the Schr\"odinger-Airy
flow reduces to the Schr\"odinger flow from $S^1\times\mathbb{R}$ or
$\mathbb{R}\times\mathbb{R}$ into a K\"ahler manifold $(N, J, h)$
formulated by (see \cite{BIKT, Uh, DW, YD, GKT, PW})
 $$\frac{\partial u}{\partial t} = J(u)\nabla_x u_x = J(u)\tau(u),$$
which is an infinite dimensional Hamilton system with respect to the
energy functional. In particular, Rodnianski, Rubinstein and Staffilani
in \cite{RRS} established the global well-posedness of the initial value
problem for the Schr\"odinger flow for maps from the real line into K\"ahler
manifolds and for maps from the circle into Riemann surfaces.

If $\alpha=\gamma=0$ and $\beta=1$, (\ref{eq:1.1}) then reduces to the
KdV geometric flow (see \cite{SW} for more details) on a K\"ahler manifold
$(N, J, h)$ formulated by
$$\frac{\partial u}{\partial t} = \nabla_x^2 u_x
+{1\over2}R(u_x,J_uu_x)J_uu_x,$$
which is an infinite dimensional Hamilton system with respect to the
pseudo-helicity functional.

The Schr\"odinger-Airy geometric flow (\ref{eq:1.1}) is a direct extension
to a K\"ahler manifold of the following curve flow which is used to characterize
the motion of vortex filament. The curve flow is about maps $\textbf{u}$ from
$S^1\times\mathbb{R}$ or $\mathbb{R}\times\mathbb{R}$ into Euclidean
space $\mathbb{R}^3$ which satisfy the following evolution equation
\begin{equation}\label{eq:1.3*}
{\partial\textbf{u}\over \partial t}=\alpha\textbf{u}_s\times \textbf{u}_{ss}+
\beta\big[\textbf{u}_{sss}+{3\over2}\textbf{u}_{ss}\times
(\textbf{u}_s\times \textbf{u}_{ss})\big].
\end{equation}
More precisely, in \cite{FM}, Fukumoto and Miyazaki discussed the motion of a thin
vortex filament with axial velocity and reduced the equation of the
vortex self-induced motion to a nonlinear evolution equation which
can be formulated by using the Frenet frame of curve flow as
\begin{equation}\label{eq:1.3!}
\textbf{u}_t=k\textbf{b}+\gamma({1\over2}k^2\textbf{t}+k_s\textbf{n}+k\tau
\textbf{b}).
\end{equation}
Here $\textbf{u}=\textbf{u}(s,t)$ denotes an evolving filament curve
from $\mathbb{R}\times \mathbb{R}$ into $\mathbb{R}^3$ with
arclength parameter $s$ and time $t$, $\textbf{t},\textbf{n}$ and $\textbf{b}$
denote the unit tangent,
normal and binormal vectors of the filament curve respectively; $k$ and $\tau$ denote the
curvature and the torsion of the filament curve respectively,
$k_s=\frac{\partial k}{\partial s}$ and $\gamma$ is a real constant.
It is not difficult to see that by the Frenet-Serret
formulas, (\ref{eq:1.3!}) could also be reformulated as
\begin{equation*}
\textbf{u}_t=\textbf{u}_s\times \textbf{u}_{ss}+
\gamma\big[\textbf{u}_{sss}+{3\over2}\textbf{u}_{ss}\times
(\textbf{u}_s\times \textbf{u}_{ss})\big].
\end{equation*}
After rescaling  with respect to $t$, the equation could be changed to (\ref{eq:1.3*}).
Thus, by Hasimoto transformation
\begin{eqnarray*}\label{eq:1.4}
\Psi=k \exp \big(i\int^s \tau ds'\big),
\end{eqnarray*}
the equation (\ref{eq:1.3*}) would be reduced to the standard Schr\"odinger-Airy (or Hirota)
equation(see \cite{Hi, SW, TU})
\begin{equation}\label{eq:1.8*}
i\Psi_t+\alpha(\Psi_{ss}+{1\over2}|\Psi|^2\Psi)-i\beta(\Psi_{sss}+{3\over2}|\Psi|^2\Psi_s)=0,
\end{equation}
which is a general model for propogation of pulses in an optical fiber.
In the case $\beta=0$, the equation (\ref{eq:1.8*}) reduces to a
cubic nonlinear Schr\"{o}dinger equation and in the case $\alpha=0$,
(\ref{eq:1.8*}) reduces to the modified KdV equation.

To see the inner relation between (\ref{eq:1.1}) and (\ref{eq:1.3*}) more clearly,
differentiating
 (\ref{eq:1.3*}) with respect to $s$ and and letting
$u(x, t)\equiv \textbf{u}_s(s, t)$
we obtain
\begin{equation}\label{eq:sk}
{\partial u\over \partial t}=\alpha u\times u_{ss}+
\beta\left[u_{sss}+{3\over2}\Big(u_{s}\times
(u\times u_{s})\Big)_s\right].
\end{equation}

One could verify that if given a smooth initial map
$u_o=u(s,0)$ into a unit sphere
$S^2$, then the solution $u$ to
(\ref{eq:sk}) will always be on $S^2$, i.e.,
the length of the tangent vector $|\textbf{u}_s|$ is
preserved (see \cite{NT}). So, the equation (\ref{eq:sk}) describes the evolution of a
geometric flow from $\mathbb{R}$ or $S^1$ into $S^2$. Nishiyama and
Tani have shown the time local and global existence of the initial
value problem of (\ref{eq:sk}) in \cite{NT} and \cite{TN}
respectively.

It is not difficult to see that the Schr\"odinger-Airy geometric flow (\ref{eq:1.1}) is a
natural generalization of (\ref{eq:sk}). Indeed, for a map $u(x)$ from $S^1$ or $\mathbb{R}$
into a two dimensional standard unit sphere $S^2$,  $$u\times:T_uS^2\rightarrow T_uS^2$$ is
just the standard complex structure on $S^2$ and $\tau(u)=\nabla_xu_x$ is the tangential part
of $u_{xx}$ on $S^2$. Meanwhile, a simple calculation shows that there hold
\begin{eqnarray}\label{eq:1.12}
R(u_x,J_uu_x)J_uu_x&=&|u_x|^2u_x,\nn
\end{eqnarray}
\begin{eqnarray}\label{eq:1.13}
\nabla_xu_x&=&u_{xx}+\langle u_x, u_x\rangle u,\nn
\end{eqnarray}
\begin{eqnarray}
\nabla^2_xu_x&=&{d\over dx}(\nabla_xu_x)+\langle
u_x,\nabla_xu_x\rangle u\nn\\
&=&u_{xxx}+3\langle u_x,u_{xx}\rangle u+ |u_x|^2u_x\label{eq:1.14}.\nn
\end{eqnarray}
So,
$$\nabla^2_xu_x+\frac{1}{2}R(u_x,J_uu_x)J_uu_x=u_{xxx}+{3\over2}\big(u_x\times(u\times u_x)\big)_x.$$
Hence, for the case $N=S^2$, the geometric Schr\"odinger-Airy flow
(\ref{eq:1.1}) then reduces to
\begin{eqnarray}
\frac{\partial u}{\partial t} &=&\al J_u\nb_xu_x+\beta\left( \nabla_x^2 u_x
+{1\over2}R(u_x,J_uu_x)J_uu_x\right)\nn\\
&=&\alpha u\times u_{xx}+
\beta\left[u_{xxx}+{3\over2}\Big(u_{x}\times
(u\times u_{x})\Big)_x\right],\nn
\end{eqnarray}
which is just the equation (\ref{eq:sk}).

On the other hand, we recall that the derivative nonlinear Schr\"odinger (DNLS) equation
\begin{equation}\label{eq:Der}
iq_t + q_{xx} = i(|q|^2q)_x, \qquad x, t\in\mathbb{R}
\end{equation}
where $q(x, t)$ is a complex-valued function, arises in the study of
wave propagation in optical fibers \cite{Ag} and in plasma physics
\cite{Mj} (see \cite{CYL, Gu, L} for further references). Scattering
and well-posedness for the Cauchy problem of this equation defined
on $\mathbb{R}$ has been studied by many authors (see \cite{Tak} and
references therein). In particular, J. Colliander, M. Keel, G.
Staffilani, H. Takaoka and T. Tao \cite{CT1, CT2} showed that, under
a smallness assumption on the $L^2$ norm of the initial data, this
equation is globally well-posed in the energy space $H^1$.

While the propagation of nonlinear pulses in optical fibers is described to
first order by the nonlinear Schr\"odinger equation, it is necessary
when considering very short input pulses to include higher-order
nonlinear effects. When the effect of self-steepening ($s \neq 0$)
is included, the fundamental equation is

\begin{equation}\label{eq:Der1.2}
u_\tau = i(\frac{1}{2}u_{\xi\xi}+ |u|^2u) - s
(|u|^2u)_\xi
\end{equation}
where $u$ is the amplitude of the complex field envelope, $\xi$ is a
time variable, and $\tau$ measures the distance along the fiber with
respect to a frame of reference moving with the pulse at the group
velocity. Equation (\ref{eq:Der}) is related to equation
(\ref{eq:Der1.2}) by changing variables (see \cite{L}):

$$u(\tau, \xi)=q(x,t)\exp i(\frac{t}{4s^4}-\frac{x}{2s^2}), \quad\quad
\tau=\frac{t}{2s^2},\quad\quad \xi=-\frac{x}{2s} + \frac{t}{2s^3}.$$
It is an integrable equation and the initial value problem on the line can be analyzed
by means of the inverse scattering transform as demonstrated by Kaup and
Newell \cite{KN}.

For a map $u(x)$ from $S^1$ or $\mathbb{R}$
into a two dimensional standard unit sphere $S^2$, the equation
\begin{eqnarray}\label{eq:sa}
u_t=\al u\times u_{xx} + \gamma |u_x|^2u_x = \al J(u)\tau(u) +
\gamma |u_x|^2u_x
\end{eqnarray}
is equivalent to the above derivative nonlinear Schr\"odinger equation. In
fact, we could see this by adopting the generalized Hasimoto
transformation used in \cite{Uh}.

Precisely, if we assume that $N$ is a compact closed Riemann surface
and assume $u(x,t)\in H^k_{Q}(\mathbb{R}, N)$ (the definition of
$H^k_{Q}(\mathbb{R}, N)$ is given in section 1.2) is a smooth
solution of (\ref{eq:sa}) on $\mathbb{R}\times [0,T)$ such that
$u(x,t)\rightarrow Q \in N$ as $x\rightarrow \infty$ . Let
$\{e,Je\}$ denote the orthonormal frame for $u^{-1}TN$ constructed
in the following manner: Fix a unit vector $e_0\in T_{Q}N$, and for
any $t\in [0,T)$, let $e(x,t)\in T_{u(x,t)}N$ be the parallel
translation of $e_0$ along the curve $u(\cdot,t)$, i.e.,
$\nabla_xe=0$ and $\lim\limits_{x\rightarrow \infty}
e(x,t)\rightarrow e_0$. In local conformal coordinates $z$, with
$z(Q)=0$, after fixing the coordinates of $e_0$ to be
$\zeta_0={1\over \lambda(0,0)}$, the coordinates of the vector $e$
are given by $\zeta={1\over\lambda}e^{i\phi}$  where
 $$\phi=\int_{-\infty}^x Im\left({\lambda_z\over\lambda}z_x\right)dx'$$
The expression of $\zeta$ and $\phi$ is derived by solving the
equation $\nabla_xe=0$, i.e.,
$$\zeta_x+2(\log \lambda)_zz_x\zeta=0.$$
Note that since $\{e, Je\}$ is an orthonormal frame, we have that
$\nabla_te=\eta Je$, where $\eta$ is a real-valued function.
Furthermore, in this frame the coordinates of $u_t$ and $u_x$ are
given by two complex valued functions $p$ and $q$. We set as
follows: first let
\begin{eqnarray*}
u_t&=&p_1e+p_2Je,\\
u_x&=&q_1e+q_2Je,
\end{eqnarray*}
where $p_1$, $p_2$, $q_1$, $q_2$ are real-valued functions of
$(x,t)$, and let $p=p_1+ip_2$ and $q=q_1+iq_2$. Since $\nabla_xe=0$,
it is easy to see that $\nabla_xu_x=q_{1x}e+q_{2x}Je$. Thus, by
(\ref{eq:sa}) we have
$$p_1e+p_2Je=\alpha \left(-q_{2x}e+q_{1x}Je\right)+\gamma |q|^2(q_{1}e+q_{2}Je),$$
which is equivalent to
\begin{eqnarray}\label{eq:sa1}
p=i\alpha q_x+\gamma|q|^2q.
\end{eqnarray}
From $\nabla_xu_t=\nabla_tu_x$, we have
\begin{eqnarray}\label{eq:sa2}
p_x=q_t+i\eta q.
\end{eqnarray}
Combining (\ref{eq:sa1}) and (\ref{eq:sa2}), we obtain
\begin{eqnarray}\label{eq:sa3}
q_t=i\alpha q_{xx}+\gamma \left(|q|^2q\right)_x-i\eta q.
\end{eqnarray}

To get the expression of $\eta$, we note that
\begin{eqnarray*}
R(u_t,u_x)e=\nabla_t\nabla_xe-\nabla_x\nabla_te= -\nabla_x(\eta
Je)=-\eta_xJe.
\end{eqnarray*}
On the other hand
\begin{eqnarray*}
R(u_t,u_x)e=(p_1q_2-p_2q_1)R(e,Je)e=K(u)Im(p\bar{q})Je,
\end{eqnarray*}
where $K(u)=R(e,Je,e,Je)=h\big( e,R(e,Je)Je\big)$ is the Gaussian
curvature of $N$ at $u(x,t)$.

Thus we have
\begin{eqnarray}\label{eq:sa4}
\eta_x=-K(u)Im(p\bar{q}).
\end{eqnarray}
Substituting (\ref{eq:sa1}) into (\ref{eq:sa4}) yields
\begin{eqnarray*}
\eta_x=-K(u)Im(i\alpha
q_x\bar{q})=-{\alpha\over2}K(u)\left(|q|^2\right)_x.
\end{eqnarray*}
Integrating this over $(-\infty,x]$ yields
\begin{eqnarray}\label{eq:sa5}
\eta(x,t)&=&-{\alpha\over2}K(u)|q|^2+{\alpha\over2}\int_{-\infty}^x (K(u))_x(x',t)|q|^2(x',t)dx'\nn\\
&&{}-\eta(-\infty,t).
\end{eqnarray}
Thus, combining (\ref{eq:sa3}) and (\ref{eq:sa5}) yields
\begin{eqnarray}\label{eq:sa6}
q_t&=&i\alpha \left(q_{xx}+{K(u)\over2}|q|^2q\right)+\gamma \left(|q|^2q\right)_x\nn\\
&&{}-{i\alpha\over2}q\int_{-\infty}^x
(K(u))_x(x',t)|q|^2(x',t)dx'-iq\eta(-\infty,t).\nn
\end{eqnarray}
Let $\Psi=qe^{i\int_0^t\eta(-\infty,\tau)dt'}$ and we could easily
see that
\begin{eqnarray}\label{eq:sa7}
\Psi_t&=&i\alpha
\left(\Psi_{xx}+{K(u)\over2}|\Psi|^2\Psi\right)+\gamma
\left(|\Psi|^2\Psi\right)_x\nn\\
&&{}-{i\alpha\over2}\Psi\int_{-\infty}^x
(K(u))_x(x',t)|\Psi|^2(x',t)dx'.
\end{eqnarray}
It is easy to see that if $N$ is a Riemann surface with
constant sectional curvature $K$, (\ref{eq:sa7}) is reduced to
\begin{eqnarray*}
\Psi_t&=&i\alpha \left(\Psi_{xx}+{K\over2}|\Psi|^2\Psi\right)+\gamma
\left(|\Psi|^2\Psi\right)_x.
\end{eqnarray*}
Specially, if $N$ is unit sphere $S^2$ then (\ref{eq:sa7}) is just
(\ref{eq:Der1.2}).

It is well-known that the Schr\"odinger-Airy equation \cite{HK, Kod}
reads as
$$u_t+i\lambda_1 u_{xx} +\lambda_2 u_{xxx}+ i\lambda_3|u|^2u
+\lambda_4|u|^2u_x+\lambda_5 u^2\bar{u}_x=0, \qquad x, t\in
\mathbb{R}$$ where $\lambda_1, \lambda_2\in \mathbb{R}$,
$\lambda_2\neq0$, $\lambda_3, \lambda_4, \lambda_5\in\mathbb{C}$ and
$u=u(x,t)$ is a complex valued function. In the case $\al\neq0$,
$\beta\neq0$ and $\gamma\neq0$, by virtue of the above generalized
Hasimoto transformation we can transform the geometric flow on a
closed Riemann surface with constant sectional curvature into a
Schr\"odinger-Airy equation (see \cite{Uh, SW})
\begin{eqnarray*}
\Psi_t&=&i\alpha
\left(\Psi_{xx}+{K\over2}|\Psi|^2\Psi\right)+\beta\left(\Psi_{xxx}+{3K\over2}|\Psi|^2\Psi_x\right)+\gamma
\left(|\Psi|^2\Psi\right)_x.
\end{eqnarray*}
This is why we call the geometric flow as geometric
Schr\"odinger-Airy flow.

\subsection{Main results and some notations}
In this paper, we confine us to the case $\gamma=0$. First, we
discuss the local existence for the Cauchy problem of geometric
Schr\"odinger-Airy flow from $S^1$ into a K\"ahler manifold $(N,J,h)$  defined by
\begin{equation}\label{eq:1.15}
\left\{
\begin{aligned}
&u_t=\alpha J_u\nb_xu_x+\beta\left(\nabla_x^2u_x+{1\over2}R(u_x,J_uu_x)J_uu_x\right),
\quad x\in S^1;\\
&u(x,0)=u_0(x),
\end{aligned}\right.
\end{equation}
where $\al$, $\beta$ are real positive constants. Furthermore, when $N$ is some
kind of special locally Hermitian symmetric spaces, we could obtain some results
on global existence of (\ref{eq:1.15}).
The method we use here is the same as that we employed to discuss the KdV
geometric flow in \cite{SW}. Remark that we only consider the case that the domain
is $S^1$ in this paper and we could get similar results with those about
the KdV flow.

Before stating our main results, we
introduce several definitions on Sobolev spaces with vector
bundle value. Let $(E, M, \pi)$ be a vector bundle with base
manifold $M$. If $(E, M, \pi)$ is equipped with a metric, then we
may define so-called vector bundle value Sobolev spaces as follows:

\begin{defnm}
$H^m(M, E)$ is the completeness of the set of smooth sections with
compact supports denoted by $\{s| \,\,s\in C_0^\infty(M, E)\}$ with
respect to the norm
$$\|s\|^2_{H^m(M, E)}=\sum_{i=0}^m \int_M |\nabla^i s|^2dM.$$
Here $\nabla$ is the connection on $E$ which is compatible with the
metric on $E$.
\end{defnm}

\begin{defnm} Let $\mathbb{N}$ be the set of positive integers. For $m \in\mathbb{N}\cup\{0\}$,
the Sobolev space of maps from $S^1$ into a Riemannian manifold $(N, h)$ is defined by
$$H^{m+1}(S^1;N) = \{u\in C(S^1; N) |\,\, u_x\in H^m(S^1; TN)\},$$
where $u_x \in H^m(S^1; TN)$ means that $u_x$ satisfies
$$\|u_x\|^2_{H^m(S^1; TN)} =\sum^m_{j=0}\int_{S^1}h(u(x))(\nabla^j_xu_x(x),
\nabla_x^ju_x(x))dx < +\infty.$$
\end{defnm}

Similarly, we define
\begin{defnm}
The Sobolev space of maps from $\mathbb{R}$ into a Riemannian
manifold $(N, h)$ is defined by
$$H^{m+1}(\mathbb{R}; N) = \{u\in C(\mathbb{R}; N)|\,\, u_x\in H^m(\mathbb{R}; TN)\},$$
where $u_x \in H_m(\mathbb{R}; TN)$ means that $u_x$ satisfies
$$\|u_x\|^2_{H^m(\mathbb{R};TN)} =\sum^m_{j=0}\int_{\mathbb{R}}h(u(x))(\nabla^j_xu_x(x),
\nabla_x^ju_x(x))dx < +\infty;$$ and
$$H^{m+1}_Q(\mathbb{R};N) = \{u\in C(\mathbb{R}; N)|d_h(u(x), Q)\in L^2(\mathbb{R}), u_x\in H^m(\mathbb{R}; TN)\},$$
where $d_h(u(x), Q)$ denotes the distance between $u(x)$ and $Q$.
\end{defnm}
We usually use $W^{k,p}(M, N)$ to denote the space of Sobolev maps
from $M$ into $N$, and $W^{k,p}(M, \mathbb{R}^l)$ to denote the
space of Sobolev functions.

\noindent Our main results are as follows:

\begin{thm}\label{thm:1.3}
Let $(N, J, h)$ be a complete K\"{a}hler manifold. Then, the local
solutions $u\in L^\infty ([0,T],H^k(S^1, N))$ $($$k \geqslant 4$$)$ of
the Cauchy problem (\ref{eq:1.15}) with the initial map $u_0\in
H^k(S^1, N)$ is unique. Moreover, the local solution is continuous
with respect to the time variable, i.e., $u\in C([0,T],H^k(S^1,
N))$.

\end{thm}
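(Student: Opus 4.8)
The plan is to establish uniqueness by a covariant energy estimate on the ``difference'' of two solutions, and then to upgrade the weak time continuity forced by the equation to strong continuity in $H^k$.

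\textbf{Set-up of the difference field.} Let $u,v\in L^\infty([0,T];H^k(S^1,N))$, $k\geq4$, solve \eqref{eq:1.15} with $u(\cdot,0)=v(\cdot,0)=u_0$. Since the right hand side of \eqref{eq:1.15} is of spatial order three, the equation gives $u_t,v_t\in L^\infty([0,T];H^{k-3})$, so $u,v\in \mathrm{Lip}([0,T];H^{k-3})\subset C([0,T];H^{k-3})$; moreover, because $k\geq4$ and $H^1(S^1)\hookrightarrow L^\infty$, the fields $u,u_x,\nabla_xu_x,\nabla_x^2u_x$ (and likewise for $v$) are bounded in $L^\infty(S^1\times[0,T])$. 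As $u$ and $v$ agree at $t=0$ and are continuous into $H^{k-3}\subset C^0$, they remain within a common injectivity radius on some interval $[0,\delta]$. There I join them by the geodesic homotopy $U(x,t,s)$, $s\in[0,1]$, with $U(\cdot,\cdot,0)=v$ and $U(\cdot,\cdot,1)=u$, and set $W=\partial_sU$, so that $|W|\simeq d_h(u,v)$ and $W(\cdot,0,\cdot)\equiv0$. Proving $W\equiv0$ on $[0,\delta]$ and covering $[0,T]$ by finitely many such intervals will yield $u\equiv v$.

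\textbf{The energy estimate.} Differentiating \eqref{eq:1.15} in $s$ and commuting $\nabla_s$ through the spatial covariant derivatives via $\nabla_sU_x=\nabla_xW$ and $[\nabla_s,\nabla_x]Y=R(W,U_x)Y$, I obtain a covariant evolution equation of the schematic form $\nabla_tW=\beta\nabla_x^3W+\alpha J_U\nabla_x^2W+\mathcal{F}$, where $\mathcal{F}$ collects curvature terms, $\nabla R$, and the variation of $J$, each carrying at most one covariant derivative of $W$ and multiplied by the $L^\infty$-bounded fields of $U$. I then test against $W$ (and, where needed, against $\nabla_x^jW$) and integrate over the closed curve $S^1$, using that $N$ is K\"ahler so $\nabla J=0$. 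The principal parts cancel for structural reasons: $\int_{S^1}\langle W,\nabla_x^3W\rangle\,dx=0$ by the antisymmetry of $\partial_x^3$ on $S^1$, and $\int_{S^1}\langle W,J_U\nabla_x^2W\rangle\,dx=0$ since $J_U$ is parallel and skew-adjoint. Gronwall's inequality applied to the resulting energy, together with $W(\cdot,0)\equiv0$, will then force $W\equiv0$.

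\textbf{The main obstacle.} The hard part is a genuine loss of one derivative produced by the curvature commutators: after the top-order cancellation one is left with gyroscopic terms of the type $\int_{S^1}\langle R(W,\nabla_xW)u_x,U_x\rangle\,dx$, which are \emph{not} controlled by $\int|W|^2$ alone, since $R(W,\nabla_xW)$ is antisymmetric and an integration by parts merely returns the same expression. This is exactly the quasilinear difficulty of the third order (Airy) term, and it is overcome not by absorption but by the structural cancellation underlying the flow: when the contributions of $\beta\nabla_x^3$, of $\alpha J_U\nabla_x^2$, and of the curvature nonlinearity $\tfrac12R(u_x,J_uu_x)J_uu_x$ are collected, the derivative-losing terms combine, via the first Bianchi identity, the pair symmetry of $R$, and the skew-symmetry of $J$, into either exact $x$-derivatives (which vanish on $S^1$) or terms bounded by $C(\|u\|_{H^k},\|v\|_{H^k})$ times the energy. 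This is precisely the cancellation established for the a priori estimates in the existence theory, and it is the technical heart of the argument; the hypothesis $k\geq4$ enters here in making all the surviving coefficients bounded.

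\textbf{Continuity in time.} Finally I improve $u\in L^\infty([0,T];H^k)$ to $u\in C([0,T];H^k)$. By the set-up, $u\in C([0,T];H^{k-3})$ and is bounded in $H^k$, so interpolation gives $u\in C([0,T];H^s)$ for every $s<k$ and weak continuity $u\in C_w([0,T];H^k)$. Since $H^k(S^1)$ is a Hilbert space, it remains to prove that $t\mapsto\|u(t)\|_{H^k}$ is continuous, for then weak continuity plus norm continuity yields strong continuity. For this I derive for the top order energy $\int_{S^1}|\nabla_x^{k-1}u_x|^2\,dx$ an identity $\tfrac{d}{dt}\|u\|_{H^k}^2=G(t)$ with $G\in L^1([0,T])$, using the same antisymmetry and curvature cancellations as above to remove the apparent derivative loss; the identity is justified for the merely $L^\infty$-in-time solution by Friedrichs mollification in $x$ and passage to the limit, the commutator errors tending to zero. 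The absolute continuity of $t\mapsto\|u(t)\|_{H^k}^2$ then gives $u\in C([0,T];H^k)$, completing the proof.
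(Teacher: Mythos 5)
Your route for uniqueness is genuinely different from the paper's: you work intrinsically, joining the two solutions by a geodesic homotopy and estimating the variation field $W=\partial_sU$, whereas the paper isometrically embeds $N$ into $\mathbb{R}^n$, sets $w=u-v$ as an ambient-space-valued function, rewrites the flow through the projections $P(u)$ (so that $\nabla_x^2u_x=P(u)(P(u))_xu_{xx}+P(u)u_{xxx}$, etc.), and runs Gronwall on $\|w\|^2_{W^{1,2}}$. Your time-continuity argument is also organized differently: you aim for an exact identity $\tfrac{d}{dt}\|u\|^2_{H^k}=G(t)$ with $G\in L^1$ justified by mollification, while the paper proves only the one-sided bound $\tfrac{d}{dt}\|\nabla_x^{k-1}u_x\|^2_{L^2}\leqslant C$, combines it with weak continuity to get norm (hence strong) continuity at $t=0$, and then propagates to every $t$ using the uniqueness just established. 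These organizational choices are all legitimate.

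The genuine gap is in the step you yourself call ``the technical heart.'' You correctly identify that after the top-order cancellations one is left with commutator terms such as $\int\langle R(W,\nabla_xW)u_x,U_x\rangle\,dx$ which lose a derivative relative to $\int|W|^2$ and which integration by parts merely reproduces; you then assert that they disappear because of ``precisely the cancellation established for the a priori estimates in the existence theory.'' That justification does not apply: in the a priori estimates of Lemma 2.1 the dangerous terms pair the highest derivative of the single solution against itself and so become exact $x$-derivatives, whereas in a difference (or variation) estimate the analogous terms pair $\nabla_xW$ against $W$ with a non-constant curvature coefficient, and no Bianchi or skew-symmetry identity turns them into total derivatives. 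They genuinely cannot be closed at the $L^2(W)$ level. The actual resolution --- implicit in the paper's choice of the energy $\|w\|^2_{W^{1,2}}$ and in the hypothesis $k\geqslant4$ --- is to run Gronwall on $\|W\|^2_{L^2}+\|\nabla_xW\|^2_{L^2}$: the derivative-losing terms produced by the $L^2$ piece are then absorbed by the $H^1$ piece, and the new top-order terms produced by differentiating the equation once are integrated by parts onto third derivatives of the background fields, which lie in $L^\infty$ exactly because $k\geqslant4$. Your proposal never commits to this energy, never verifies that the estimate closes at that level, and never explains where $k\geqslant4$ (rather than $k\geqslant3$) enters; as written, the uniqueness claim is asserted rather than proved.
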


\begin{thm}\label{thm:1.4}
If $(N, J, h)$ is a noncompact complete K\"ahler manifold, then, for
any integer $k \geqslant 4$ the Cauchy problem of (\ref{eq:1.15}) with
the initial value map $u_0\in H^k(S^1, N)$ admits a unique local
solution $u\in C([0,T],H^k(S^1, N))$, where $T=T(N,||u_0||_{H^4})$. Moreover, if the initial value map $u_0\in H^3(S^1,N)$ and $N$
is a complete K\"ahler manifold with $|\nb^lR|\leqslant B_l(l=0,1,2,3)$
where $B_l$ are positive constants, then the Cauchy problem of (\ref{eq:1.15})
admits a local solution $u\in L^\infty([0,T],H^3(S^1,N))$, where $T=T(N,||u_0||_{H^3})$.

\end{thm}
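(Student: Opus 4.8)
The plan is to establish existence via a parabolic regularization (vanishing viscosity) scheme combined with uniform energy estimates. Since the flow (\ref{eq:1.15}) is dispersive rather than parabolic, one cannot apply standard parabolic theory directly; instead, I would consider the regularized family
\begin{equation*}
u^\epa_t=-\epa\,\nb_x^4 u^\epa+\alpha J_{u^\epa}\nb_xu^\epa_x+\beta\left(\nb_x^2u^\epa_x+\tfrac{1}{2}R(u^\epa_x,J_{u^\epa}u^\epa_x)J_{u^\epa}u^\epa_x\right),
\end{equation*}
for $\epa>0$, where the added term $-\epa\,\nb_x^4u^\epa$ makes the system a (fourth-order) nonlinear parabolic equation on maps into $N$. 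Standard parabolic existence theory for geometric evolution equations (working in local coordinates or via an isometric embedding $N\hookrightarrow\mathbb{R}^L$ and projecting) then yields, for each fixed $\epa$, a smooth short-time solution $u^\epa$ with initial data $u_0$. The core of the argument is to derive $\epa$-independent a priori bounds that survive the limit $\epa\to0$.

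The key step is the energy estimate in $H^k$, $k\geqslant4$. I would differentiate the quantity $\sum_{j=0}^{k-1}\int_{S^1}|\nb_x^j u^\epa_x|^2\,dx$ in time and integrate by parts. Because $\alpha,\beta$ are constant and $N$ is K\"ahler (so $\nb J=0$), the top-order terms coming from $\alpha J\nb_xu_x$ and $\beta\nb_x^2u_x$ are skew-symmetric and contribute nothing to the growth of the highest derivative after integration by parts over the closed manifold $S^1$; this is precisely the energy-conservation mechanism noted in the introduction. The viscous term $-\epa\nb_x^4u^\epa$ produces a good sign $-\epa\int|\nb_x^{j+2}u^\epa_x|^2$ that can only help. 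What remains are lower-order commutator and curvature terms: repeatedly commuting $\nb_x$ past the curvature expression $R(u_x,Ju_x)Ju_x$ and past the connection produces terms involving $R$, $\nb R$, $\nb^2R$, $\nb^3R$ and powers of $|u_x|$ and its derivatives. For the $H^4$ case with compact $N$ these are bounded automatically; for the $H^3$ case on noncompact $N$ this is exactly where the hypothesis $|\nb^l R|\leqslant B_l$ ($l=0,1,2,3$) enters, furnishing the constants needed to close the estimate. After applying Gagliardo--Nirenberg interpolation and Sobolev embedding on $S^1$ to absorb the nonlinear products, one obtains a differential inequality of the form $\tfrac{d}{dt}\|u^\epa_x\|_{H^{k-1}}^2\leqslant C(\|u^\epa_x\|_{H^{k-1}}^2)$ with $C$ independent of $\epa$, yielding a uniform existence time $T=T(N,\|u_0\|_{H^k})$ and a uniform bound on $[0,T]$.

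With the uniform bounds in hand, I would pass to the limit: the family $\{u^\epa\}$ is bounded in $L^\infty([0,T];H^k(S^1,N))$, and using the equation the time derivatives $u^\epa_t$ are bounded in a lower Sobolev norm, so by Aubin--Lions compactness one extracts a subsequence converging strongly in $C([0,T];H^{k-1})$ and weakly-$*$ in $L^\infty([0,T];H^k)$ to a limit $u$. The strong convergence in $H^{k-1}$ (with $k-1\geqslant3$, hence $C^2$) is enough to pass to the limit in every nonlinear term, including the curvature term, so $u$ solves (\ref{eq:1.15}); the viscous term drops out because $\epa\,\nb_x^4u^\epa\to0$ in the appropriate weak sense. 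Uniqueness and the upgrade from $L^\infty$ to $C([0,T];H^k)$ are then supplied by Theorem \ref{thm:1.3}. The main obstacle I anticipate is the top-order energy estimate: one must verify carefully that, after all integrations by parts, the genuinely dangerous terms cancel by skew-symmetry (exploiting $\nb J=0$ and the symmetries of $R$), leaving only curvature and commutator terms that are controllable by the bounds $B_l$—this bookkeeping, rather than any single inequality, is the heart of the argument, and it is also what dictates the precise regularity thresholds ($k\geqslant4$ in general, $k=3$ under the curvature bounds) appearing in the statement.
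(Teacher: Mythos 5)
Your overall framework (parabolic regularization, $\epsilon$-independent energy estimates exploiting the skew-symmetry of the dispersive terms, compactness, then uniqueness and time-continuity from Theorem \ref{thm:1.3}) is the same skeleton the paper uses. But there is a genuine gap: you never confront the actual difficulty of this theorem, namely that $N$ is noncompact and, in the first assertion, carries \emph{no} curvature bounds. All the constants in your $H^k$ energy inequality depend on $\sup|\nabla^l R|$ over the region swept out by the solution, so to close the estimate one must first confine the image $u(S^1,t)$ to a fixed compact neighborhood $\Omega$ of $u_0(S^1)$ and then prove a \emph{uniform positive lower bound on the exit time} $T'=\sup\{t: u(S^1,t)\subset\Omega\}$. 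The paper does this via $\sup_x d_N(u(x,t),u_0(x))\leqslant \mathcal{M}t$ with $\mathcal{M}=\|u_t\|_{L^\infty}$, which is controlled by interpolating $\|u_t\|_{L^2}$ against $\|u_t\|_{H^1}$. This is exactly where the regularity threshold comes from, and it is where your scheme breaks: for the regularized equation the term $-\epsilon\nabla_x^3 u_x$ makes $u^\epsilon_t$ fourth order in $u$, so bounding $\|u^\epsilon_t\|_{H^1}$ requires $H^5$ data (this is Lemma \ref{lm:4.1}(ii) in the paper). Running your direct regularization on $H^4$ initial data therefore does not yield an exit time, hence no existence time, depending only on $\|u_0\|_{H^4}$.

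The paper circumvents this by a two-stage argument you are missing: it first mollifies the initial map, $u_0^i\to u_0$ in $H^4$ with $u_0^i$ smooth, solves the problem for each smooth $u_0^i$ (where the regularization poses no difficulty), and only then derives uniform-in-$i$ bounds and a uniform existence time $T^*$ in terms of $\|u_0\|_{H^4}$ --- crucially, the exit-time estimate is now performed on the \emph{unregularized, third-order} equation, for which $\|u^i_t\|_{H^1}$ is controlled by $\|u^i_x\|_{H^3}$, i.e.\ by the $H^4$ norm of $u^i$. Passing to the limit in $i$ (not in $\epsilon$) then gives the solution in $L^\infty([0,T^*],H^4)$. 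Your treatment of the second assertion ($u_0\in H^3$ under $|\nabla^l R|\leqslant B_l$) is essentially right, since there the constants are global and no exit-time argument is needed; but for the first assertion you need to add the mollification of the initial data and the exit-time estimate at the level of the third-order flow, or the claimed dependence $T=T(N,\|u_0\|_{H^4})$ does not follow.
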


\begin{thm}\label{thm:1.5}
Assume that $(N,J,h)$ is a complete locally Hermitian symmetric
space satisfying
 $$h(R(Y, X)X, R(X, JX)JX)\equiv 0,$$
where $R(\cdot, \cdot)$ is the Riemannian curvature operator on $N$.
Then for any integer $k\geqslant4$ the Cauchy problem
(\ref{eq:1.15}) with the initial map $u_0\in H^k(S^1,N)$ admits a
unique global solution $u\in C([0,\infty),H^k(S^1,N))$.

\end{thm}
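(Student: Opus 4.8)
The plan is to establish global existence for Theorem~\ref{thm:1.5} by combining the local existence theory from Theorem~\ref{thm:1.4} with uniform-in-time a priori bounds on the Sobolev norms $\|u_x\|_{H^{k-1}}$, which force the local solution to extend to all of $[0,\infty)$. Since $N$ is a locally Hermitian symmetric space we have $\nabla R \equiv 0$, so all terms involving derivatives of the curvature tensor vanish; this is what makes the energy-type estimates tractable. The strategy is standard: if we can show that on any existence interval $[0,T)$ the quantity $\|u_x(\cdot,t)\|_{H^{k-1}}$ stays bounded by a constant depending only on the initial data and $N$ (but not on $T$), then the local solution cannot blow up in finite time, and repeated application of the local existence result yields a global solution.

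The key steps I would carry out are as follows. First, I would exploit the conservation laws advertised in the introduction: since $N$ is locally Hermitian symmetric and $\gamma=0$, the flow conserves both the energy $E_1(u)=\frac12\int_{S^1}|u_x|^2\,dx$ and the pseudo-helicity $E_2(u)=\int_{S^1}h(\nabla_xu_x,J u_x)\,dx$. Conservation of $E_1$ immediately controls $\|u_x\|_{L^2}$. Next I would set up the hierarchy of higher-order energies and differentiate them in time. The model computation is to consider quantities like $\int_{S^1}|\nabla_x^j u_x|^2\,dx$ for $j=1,\dots,k-1$ and compute their time derivatives along the flow \eqref{eq:1.15}. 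Each such derivative produces terms in which the leading-order spatial derivatives must cancel — the dispersive structure of the Schr\"odinger-Airy operator should make the top-order contributions integrate to zero or telescope — leaving lower-order terms that one bounds by the already-controlled quantities via integration by parts, the Gagliardo--Nirenberg--Sobolev inequalities on $S^1$, and the parallelism $\nabla R=0$.

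The curvature hypothesis $h(R(Y,X)X,R(X,JX)JX)\equiv 0$ is the decisive structural assumption, and identifying exactly where it is used is what I expect to be the main obstacle. When one integrates by parts in the higher-order energy estimates, the curvature terms $\beta\cdot\frac12 R(u_x,J_uu_x)J_uu_x$ and $\alpha J_u\nabla_x u_x$ interact and generate cross terms that are generically cubic or higher in $u_x$ with curvature coefficients. My expectation is that the stated identity is precisely the condition that makes the otherwise-uncontrollable top-order curvature cross term vanish, so that the energy inequality closes as
\begin{equation*}
\frac{d}{dt}\|u_x\|_{H^{k-1}}^2 \leq C\big(N,\|u_x\|_{H^{k-1}}^2\big)\,\|u_x\|_{H^{k-1}}^2,
\end{equation*}
with a polynomial right-hand side that, combined with the conserved $L^2$ and lower norms, yields a bound independent of $T$ by Gronwall's inequality. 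The delicate bookkeeping is tracking which curvature contractions appear at top order and verifying that the hypothesis annihilates exactly those.

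Finally, I would assemble the pieces: the uniform a priori bound on $\|u_x\|_{H^{k-1}}$ together with the local existence time $T=T(N,\|u_0\|_{H^4})$ from Theorem~\ref{thm:1.4} and the uniqueness and continuity-in-time from Theorem~\ref{thm:1.3} allow a continuation argument. Since the existence time depends only on a norm that remains bounded uniformly, one restarts the solution on successive intervals of length bounded below, covering $[0,\infty)$, and concludes $u\in C([0,\infty),H^k(S^1,N))$. The main work, to reiterate, lies in the energy-hierarchy estimates and in pinning down the role of the curvature identity; the continuation argument itself is routine once the bounds are in hand.
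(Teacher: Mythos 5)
Your overall architecture --- conservation laws, a priori bounds, then continuation via Theorems \ref{thm:1.3} and \ref{thm:1.4} --- matches the paper, but there are two concrete gaps in the core of the argument. First, the inequality you aim for, $\frac{d}{dt}\|u_x\|_{H^{k-1}}^2\leqslant C\big(N,\|u_x\|_{H^{k-1}}^2\big)\|u_x\|_{H^{k-1}}^2$ with a polynomial coefficient, does \emph{not} give a $T$-independent bound by Gronwall: an ODE of the form $f'\leqslant Cf^2$ blows up in finite time. To run the continuation you must first secure a bound on $\|u_x\|_{H^2}$ whose constant depends only on the initial data; only then do the higher-order inequalities (whose coefficients depend on $\|u_x\|_{H^{m-1}}$, as in (\ref{eq:4.17})) become linear in the top norm. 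Second, and more seriously, the naive energies $\int|\nabla_x^ju_x|^2dx$ do not have controllable evolution equations: already for $j=1$ one finds (see (\ref{eq:3.8})) the term $3\beta\int\la\nabla_xu_x,R(\nabla_xu_x,Ju_x)J\nabla_xu_x\ra dx$, cubic in $\nabla_xu_x$, which cannot be absorbed using $E_1$ alone. The paper's remedy is to work with the curvature-corrected functionals $E_3=\int|\nabla_xu_x|^2dx-\frac14\int\la u_x,R(u_x,Ju_x)Ju_x\ra dx$ and $E_4=2\int|\nabla_x^2u_x|^2dx-3\int\la\nabla_xu_x,R(\nabla_xu_x,u_x)u_x\ra dx-5\int\la\nabla_xu_x,R(\nabla_xu_x,Ju_x)Ju_x\ra dx$, whose specific coefficients are chosen exactly so that the uncontrollable highest-order terms cancel when differentiated in time. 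Your proposal never identifies the need for these correction terms, and the plan as written fails at the very first rung of the hierarchy.

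You also misplace the role of the hypothesis $h(R(Y,X)X,R(X,JX)JX)\equiv 0$. It is not used to cancel a top-order cross term in a generic $H^{k-1}$ estimate; it is used exactly once, to annihilate the single residual term ${dE_3}/{dt}=\beta\int\la R(\nabla_xu_x,u_x)u_x,R(u_x,Ju_x)Ju_x\ra dx$ (Lemma \ref{lem:3.3*}), thereby upgrading $E_3$ to an exact conservation law and yielding the uniform-in-time bound on $\|\nabla_xu_x\|_{L^2}$. With that in hand, ${dE_4}/{dt}\leqslant C(1+E_4)$ with $C$ depending only on conserved quantities, so linear Gronwall controls $\|\nabla_x^2u_x\|_{L^2}$, and the continuation argument you describe then goes through. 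Finally, the conservation of $E_2$ you invoke is true but useless here: $E_2$ is not coercive and controls no Sobolev norm; the functionals that actually carry the proof are $E_3$ and $E_4$, which your proposal omits.
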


\begin{rem} If $N=M_1\times M_2\times \cdots \times M_n$ is a product
manifold where $(M_i,J_i,h^i)$ ($i=1,2,\cdots,n$) are all manifolds
satisfy the conditions in theorem {\ref{thm:1.5}}, then the results in
theorem {\ref{thm:1.5}} still hold true for $N$.
\end{rem}

\begin{rem}
We have shown in \cite{SW} that the identity on Riemannian curvature in Theorem
\ref{thm:1.5} holds on K\"ahler manifolds with constant holomorphic
sectional curvature, complex Grassmannians, the first class of
bounded symmetric domains. The examples of K\"ahler manifolds with
constant holomorphic sectional curvature are $\mathbb{C}^k$, the
flat complex torus $\mathbb{C}T^l$, the complex projective spaces
$\mathbb{C}P^m$, complex hyperbolic spaces $\mathbb{C}H^n$ and the
compact quotient spaces of complex hyperbolic space modulo by a
torsion free discrete subgroup of automorphism group of
$\mathbb{C}H^n$ etc.
\end{rem}

\begin{rem} It seems that the uniqueness results may be true for
the local solution to the Cauchy problem of Schr\"odinger-Airy flow $u\in L^\infty
([0,T],H^3(S^1, N))$. If so, we can also improve the existence
results and the regularity of solution. In particular, the
uniqueness of solutions to the KdV flows from $S^1$ does not depend
on the the parabolic approximation. Maybe, one could find a
different method to improve regularity.
\end{rem}

As in \cite{SW}, we still adopt the parabolic approximation and employ the geometric
energy method developed in \cite{DW,YD} to show these local
existence problems. To prove the global existence we need to exploit the
following conservation laws $E_1(u)$, $E_3(u)$ and semi-conservation law $E_4(u)$ where
\begin{eqnarray}
E_1(u)&\equiv&{1\over2}\int h(u_x,u_x)dx,\nn\\
E_3(u)&\equiv&\int h(\nabla_x u_x,\nabla_x u_x)dx-{1\over4}\int
h\big( u_x,R(u_x,Ju_x)Ju_x\big) dx,\nn\label{eq:1.18}
\end{eqnarray}
and
\begin{eqnarray}
E_4(u)&\equiv&2\int h(\nabla^2_xu_x,\nabla^2_xu_x)dx-3\int h\big(
\nabla_xu_x,R(\nabla_xu_x,u_x)u_x\big) dx\nn\\
&&{}-5\int h\big(\nabla_x u_x,R(\nabla_xu_x,Ju_x)Ju_x\big)
dx\nn\label{eq:1.19}.
\end{eqnarray}
If $N$ is a locally Hermitian symmetric space, for the smooth
solution $u$ to the Cauchy problem (\ref{eq:1.15}) we will establish
the following in Section 3:
\begin{eqnarray}
{d\over dt}E_1(u) = 0,\quad \quad\quad {d\over dt}E_3(u)=\int h\big(
R(\nabla_xu_x,u_x)u_x,R(u_x,Ju_x)Ju_x\big) dx\label{eq:1.20},
\end{eqnarray}
and
\begin{eqnarray}
{d\over dt}E_4(u)&\leqslant&C(N,E_1(u_0),E_3(u_0))(1+E_4)
\label{eq:1.21}.
\end{eqnarray}

From (\ref{eq:1.20}) we deduce that when $N$ is a locally Hermitian
symmetric space satisfying some geometric condition (see Corollary
(\ref{lm:3.3})), there holds true $E_3(u)=E_3(u_0)$. We utilize
these conservation laws with respect to $E_1(u)$ and $E_3(u)$
to get a uniform a priori bound of $||\nabla_xu_x||_{L^2}$
independent of $T$. By virtue of (\ref{eq:1.21}), we obtain the
global existence results.

This paper is organized as follows: In Section 2 we employ the
geometric energy method to establish the local existence of Schr\"odinger-Airy
geometric flow. We know that the conservation and semi-conservation
laws mentioned  before are crucial for us to establish the global
existence of the Cauchy problem of Schr\"odinger-Airy geometric flow. We will give
a detailed calculation in Section 3.  The global existence of Schr\"odinger-Airy
geometric flows on some special K\"ahler manifolds is proved in Section 4.

\section{Local Existence and Uniqueness}

In this section we establish the local existence and the uniqueness of solutions
for the Cauchy problem of the Schr\"odinger-Airy flow (\ref{eq:1.15}) on a K\"{a}hler
manifold $(N,J,h)$
\begin{equation}\label{eq:4.1}
\left\{
\begin{aligned}
&u_t=\al J_u\nb_xu_x+\beta\left(
\nabla_x^2u_x+{1\over2}R(u_x,J_uu_x)J_uu_x\right),\quad x\in S^1;\nn\\
&u(x,0)=u_0(x).
\end{aligned}\right.\nn
\end{equation}

We still use the approximate method as in \cite{SW}
to show the local existence of (\ref{eq:1.15}).
To this end, we discuss the following Cauchy problem:
\begin{equation}\label{eq:4.2}
\left\{
\begin{aligned}
&u_t=-\epa\nabla_x^3u_x+\al J_u\nb_xu_x+\beta\left(
\nabla_x^2u_x+{1\over2}R(u_x,J_uu_x)J_uu_x\right),\quad x\in S^1;\\
&u(x,0)=u_0(x).
\end{aligned}\right.
\end{equation}
where $\epa>0$ is a small positive constant.

 We could imbed $N$
into a Euclidean space $\mathbb{R}^n$ for some large positive
integer $n$. Then $N$ could be regarded as a sub-manifold of
$\mathbb{R}^n$ and $u:S^1\times
\mathbb{R}\rightarrow N\subset \mathbb{R}^n$ could be represented as
$u=(u^1,\cdots,u^n)$ with $u^i$ being globally defined functions on
$S^1$  so that the Sobolev-norms of $u$ make sense. We have
\begin{eqnarray*}
||u||^2_{W^{m,2}}=\sum_{i=0}^m||D^iu||^2_{L^2},
\end{eqnarray*}
where $D$ denotes the covariant derivative for functions on $S^1$.
The equation (\ref{eq:4.2}) then becomes a fourth order parabolic
system in $\mathbb{R}^n$. In the appendix of \cite{SW}, we have
shown that the parabolic equation admits a local solution $u_\epa
\in C([0, \infty), W^{k,2}(S^1, N))$ if the initial value map
$u_0\in W^{k,2}(S^1, N)$ where $k\geqslant3$.

Thus, to prove the local existence of (\ref{eq:1.15}), we just need
to find a uniform positive lower bound $T$ of $T_\epa$
and uniform bounds for various norms of $u_\epa(t)$ in suitable
spaces for $t$ in the time interval $[0,T)$. Once we get these
bounds it is clear that $u_\epa$ subconverge to a strong solution of
(\ref{eq:1.15}) as $\epa\rightarrow 0$.

Throughout this paper, we simply denote $h(X,Y)$ by $\la X,Y\ra$ for
all $X,Y\in \Gamma(u^{-1}TN)$. Note that if $X\in \Gamma(u^{-1}TN)$
we have in local coordinates
$$(\nabla_x X)^\alpha= {\partial X^\alpha\over \partial
x}+\Gamma^\alpha_{\beta\gamma}(u){\partial u^\beta\over \partial
x}X^\gamma $$ and for $X=u_x$ we have
$$(\nabla_t u_x)^\alpha= {\partial^2 u^\alpha\over {\partial t\partial x}}
+\Gamma^\alpha_{\beta\gamma}(u){\partial u^\beta\over \partial
t}{\partial u^\gamma \over \partial x}.$$ It is easy to see that
$\nabla_t u_x=\nabla_x u_t$.

Now let $u=u_\epa$ be a solution of (\ref{eq:4.2}). We have the
following results.
\begin{lem}\label{lm:4.1}
(i) Assume that $N$ is a complete K\"ahler manifold with uniform
bounds on the curvature tensor $R$ and its covariant derivatives of
any order $($i.e., $|\nabla^lR|\leqslant B_l$, $l=0, 1, 2, \cdots$$)$,
and $u_0\in H^k(S^1,N)$ with an integer $k\geqslant3$. Then there
exists a constant $T=T(||u_0||_{H^3})$, independent of
$\epa\in(0,1)$, such that if $u\in C([0,T_\epa),H^k(S^1,N))$ is a
solution of (\ref{eq:4.2}) with $\epa\in(0,1)$, then
$T(||u_0||_{H^3})\leqslant T_\epa$ and $||u(t)||_{H^{m+1}}\leqslant
C(||u_0||_{H^{m+1}})$ for any integer $2\leqslant m\leqslant k-1$.

(ii) Assume that $N$ is a complete K\"ahler manifold and $u_0\in
H^k(S^1,N)$ with an integer $k\geqslant5$. Then there exists a
constant $T=T(||u_0||_{H^5})>0$, independent of $\epa\in(0,1)$, such
that if $u\in C([0,T_\epa),H^k(S^1,N))$ is a solution of
(\ref{eq:4.2}) with $\epa\in(0,1)$, then $T(||u_0||_{H^5})\leqslant
T_\epa$ and $||u(t)||_{H^{m+1}}\leqslant C(||u_0||_{H^{m+1}})$ for
any integer $2\leqslant m\leqslant k-1$.
\end{lem}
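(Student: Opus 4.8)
The plan is to establish both parts of Lemma~\ref{lm:4.1} by the geometric energy method, deriving differential inequalities for the Sobolev norms of $u=u_\epa$ that are uniform in $\epa\in(0,1)$. The central object is the evolution of the energy-type quantities
\begin{eqnarray*}
\mathcal{E}_m(t) \equiv \int_{S^1} |\nabla_x^m u_x|^2\, dx, \qquad m=0,1,2,\dots
\end{eqnarray*}
First I would compute $\frac{d}{dt}\mathcal{E}_m$ by commuting $\nabla_t$ through the spatial covariant derivatives and substituting the equation (\ref{eq:4.2}) for $u_t$. The key structural point is that the $\epa$-regularizing term $-\epa\nabla_x^3 u_x$ contributes a term of the form $-\epa\int |\nabla_x^{m+2}u_x|^2\,dx + (\text{lower order})$ after integration by parts, which is \emph{sign-definite} and hence a favorable (dissipative) term that can only help the estimate; we simply discard it (or use it to absorb borderline terms). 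This is exactly why the bounds come out independent of $\epa$. Simultaneously, the leading dispersive terms $\alpha J_u\nabla_x u_x$ and $\beta\nabla_x^2 u_x$ are antisymmetric in the relevant pairing, so their top-order contributions to $\frac{d}{dt}\mathcal{E}_m$ vanish upon integration by parts over $S^1$ (no boundary terms on the circle). What survives are curvature commutator terms arising from $[\nabla_t,\nabla_x]X = R(u_t,u_x)X$ and from differentiating the curvature term $\tfrac12 R(u_x,J_uu_x)J_uu_x$.

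The heart of the argument is to show these surviving terms are controlled by lower-order norms times $\mathcal{E}_m$. The commutator identity $[\nabla_t,\nabla_x] = R(u_t,u_x)$ together with the hypothesis $|\nabla^l R|\le B_l$ lets me bound each curvature term by a polynomial in the quantities $\|u_x\|_{L^\infty}, \|\nabla_x u_x\|_{L^\infty}, \dots$ multiplied by the $L^2$ norms of the top derivatives. I would then invoke Gagliardo--Nirenberg interpolation on $S^1$ to convert the $L^\infty$ factors into $L^2$-based norms, absorbing top-order factors by Young's inequality and arranging everything into the schematic form
\begin{eqnarray*}
\frac{d}{dt}\mathcal{E}_m \leqslant C(N)\, P\!\left(\sum_{j\le m}\mathcal{E}_j\right),
\end{eqnarray*}
where $P$ is a polynomial (crucially, the top-order $\mathcal{E}_m$ appears at most linearly, so the right side is controlled once the lower norms are known). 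For part~(i), the bottom of the hierarchy is handled by the energy identity $\frac{d}{dt}E_1(u)=0$ (so $\mathcal{E}_0=\|u_x\|_{L^2}^2$ is conserved up to the $\epa$-term) and then a closed estimate for $\mathcal{E}_1,\mathcal{E}_2$ giving a bound in terms of $\|u_0\|_{H^3}$; a standard ODE-comparison (Gronwall) argument then yields a lifespan $T=T(\|u_0\|_{H^3})$ and uniform bounds up to $\mathcal{E}_{k-1}$ on $[0,T]$, which is the claim $\|u(t)\|_{H^{m+1}}\le C(\|u_0\|_{H^{m+1}})$ for $2\le m\le k-1$.

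The main obstacle, and the reason part~(ii) is stated separately, is that when $N$ is only assumed complete (without uniform curvature bounds), the constants $B_l$ are unavailable: the curvature must be estimated along the image of the moving map, so $|\nabla^l R|$ is bounded only in terms of the distance the image travels in $N$, which is itself controlled by $\|u_x\|_{L^\infty}$ and hence by the solution norms. The strategy here is a bootstrap: one first proves an a~priori $C^0$-bound on the image (the map stays in a fixed compact set on a short time interval, on which $R$ and its derivatives are automatically bounded), and this requires starting the hierarchy one level higher --- at $H^5$ rather than $H^3$ --- so that enough Sobolev embedding room is available to close the estimate for the geometric quantities (in particular to bound $\|u_x\|_{L^\infty}$ and its derivatives before the curvature bounds are in hand). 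Thus the only substantive difference between (i) and (ii) is bookkeeping of where the curvature bounds enter: in (i) they are hypotheses $B_l$, while in (ii) they are recovered a~posteriori from a compactness/distance argument at the cost of two extra derivatives. I expect the delicate step to be verifying that the curvature commutator terms genuinely keep $\mathcal{E}_m$ at most linear on the right-hand side (rather than quadratic), since a quadratic top-order term would break the Gronwall closure; this is where the antisymmetry of $J_u$ and the precise algebraic structure of the term $\tfrac12 R(u_x,J_uu_x)J_uu_x$ in (\ref{eq:4.2}) --- engineered precisely so that its worst contribution cancels against the commutator from $\beta\nabla_x^2 u_x$ --- must be used carefully.
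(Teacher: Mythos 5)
Your energy hierarchy for part (i) is essentially the paper's argument: differentiate $\mathcal{E}_m=\int|\nabla_x^m u_x|^2dx$, keep the sign-definite dissipative term $-2\epa\int|\nabla_x^{m+2}u_x|^2dx$ on the good side, kill the top-order dispersive contributions by antisymmetry of $J$ and integration by parts on $S^1$, estimate the curvature commutators by interpolation, and close with an ODE comparison at the $H^3$ level followed by induction (linear Gronwall) for $3\le m\le k-1$. One small correction there: for a general K\"ahler target $\tfrac{d}{dt}\mathcal{E}_0$ is \emph{not} zero even modulo the $\epa$-term --- the $\beta$-part leaves $-\beta\int\la\nabla_xu_x,R(u_x,Ju_x)Ju_x\ra dx$, which survives unless $\nabla R=0$; the paper simply bounds it by $C\|u_x\|_{H^1}^4$, and your estimate still closes, so this is cosmetic. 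Likewise, no delicate cancellation between $\tfrac12R(u_x,J_uu_x)J_uu_x$ and the commutator of $\beta\nabla_x^2u_x$ is needed at this stage; all curvature terms are handled by brute-force integration by parts down to at most two derivatives on $u_x$.

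The genuine gap is in part (ii). You correctly identify that without global curvature bounds one must show the image stays in a fixed compact set $\Omega\supset u_0(S^1)$ for a time controlled by the data, but you do not say \emph{how} to bound that exit time, and the reason you give for the $H^5$ threshold (``Sobolev room to bound $\|u_x\|_{L^\infty}$'') cannot be right: $\|u_x\|_{L^\infty}$ is already controlled by the $H^2$ estimate available for $k=3$. The paper's mechanism is a displacement estimate: $\sup_x d_N(u(x,t),u_0(x))\le t\,\sup_{[0,t]}\|u_t\|_{L^\infty}$, with $\|u_t\|_{L^\infty}\le C\|u_t\|_{H^1}^a\|u_t\|_{L^2}^{1-a}$, which gives $T'\ge 1/\mathcal{M}$. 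The point is that $u_t$ must be read off from the \emph{regularized} equation (\ref{eq:4.2}), whose leading term is $-\epa\nabla_x^3u_x$; hence $\nabla_xu_t$ contains $\epa\nabla_x^4u_x$, and an $\epa$-uniform bound on $\|u_t\|_{H^1}$ requires an a priori bound on $\|u_x\|_{H^4}$, i.e.\ $u_0\in H^5$. This is exactly the two-derivative loss that separates (ii) from (i), and it comes from the fourth-order parabolic regularization, not from embedding the curvature. Without this step your bootstrap has no quantitative lower bound on the time the image remains in $\Omega$, so the constants $C(\Omega)$ in your differential inequalities are not yet tied to the initial data and the argument does not close.
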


\begin{proof}
First fix a $k\geqslant3$ and let
$m$ be any integer with $2\leqslant m\leqslant k-1$.
We may assume that $u_0$ is $C^\infty$ smooth.
Otherwise, we always choose a sequence of smooth functions
$\{u^i_0\}$ such that $u^i_0\rightarrow u_0$ with respect to the
norms $\|\,\cdot\,\|_{H^k}$ where $k\geqslant 3$.

As $N$ may not be compact we let $\Omega\triangleq \{p\in
N:\text{dist}_N(p,u_0(S^1))<1\}$, which is an open subset of
$N$
 with compact closure $\bar{\Omega}$. Let
$$T'=\sup\{t>0:u(S^1,t)\subset\Omega\}.$$
Now prove that for $k=3$,
\begin{eqnarray}\label{eq:4.4}
{d\over dt}||u_x||^2_{H^2}\leqslant C(\Omega,\al,\beta)\sum_{l=2}^4 ||u_x||^{2l}_{H^2},
\end{eqnarray}
for all $t\in[0,T_\epa]$.


We differentiate each term in $||u_x||_{H^2}^2$ with respect to $t$. We have
\begin{eqnarray*}\label{eq:99}
&&{}{d\over dt}\int |u_x|^2dx=2\int \la u_x,
\nabla_t u_x\ra dx \\
&=&2\int\la u_x,\nabla_x u_t\ra dx=-2\int\la \nabla_x u_x,u_t\ra
dx.\nn
\end{eqnarray*}
Substituting (\ref{eq:4.2}) yields
\begin{eqnarray}\label{eq:4.5}
&&{}{d\over dt}\int |u_x|^2dx\nn\\
&=&2\epa\int\la\nabla_x u_x,\nabla_x^3u_x\ra dx-2\alpha\int\la \nabla_x u_x,J\nb_xu_x\ra dx\nn\\
&&{}-\beta\left(2\int\la\nabla_x u_x,\nabla_x^2u_x\ra dx+\int\la\nabla_x u_x,R(u_x,Ju_x)Ju_x\ra dx\right)\nn\\
&=&-2\epa\int|\nabla_x^2 u_x|^2 dx-\beta\int\la\nabla_x u_x,R(u_x,Ju_x)Ju_x\ra dx\nn\\
&\leqslant&-2\epa\int|\nabla_x^2 u_x|^2 dx+C(\Omega,\beta)\int|\nabla_xu_x||u_x|^3 dx\nn\\
&\leqslant& C(\Omega,\beta)||u_x||^4_{H^1}.
\end{eqnarray}

We should remark that each time we substitute (\ref{eq:4.2}) into
the equality there will appear three parts: the first part that
contains $\epa$, the Schr\"odinger part which contains $\alpha$ and
the KdV part which contains $\beta$. We need to estimate each terms
in these three parts. When dealing with terms from the KdV part, we
will use the results in \cite{SW} directly since the calculations
are the same.

Now we consider $\int|\nabla_x u_x|^2dx$. Differentiating it with respect to $t$ yields
\begin{eqnarray}\label{eq:4.51}
{d\over dt}\int| \nabla_x u_x|^2 dx&=& 2\int\la \nabla_x
u_x,\nabla_t\nabla_x u_x\ra dx\nn\\
&=&2\int \la \nabla_x u_x,\nabla_x^2u_t\ra dx+2\int \la \nabla_x u_x,R(u_t,u_x)u_x\ra dx\nn\\
&=&2\int \la \nabla_x^3 u_x,u_t\ra dx+2\int\la u_t, R(\nabla_x
u_x,u_x)u_x\ra dx.\nn
\end{eqnarray}
Thus substituting Eq. (\ref{eq:4.2}) yields
\begin{eqnarray}\label{eq:4.7}
&&{}{d\over dt}\int |\nabla_x u_x|^2 dx\nn\\
&=&-2\epa\int|\nabla_x^3u_x|^2dx-2\epa\int\la\nabla_x^3u_x,R(\nabla_x u_x,u_x)u_x\ra dx\nn\\
&&{}+\al\left(2\int \la \nabla_x^3 u_x,J\nb_xu_x\ra dx+2\int\la J\nb_xu_x, R(\nabla_x
u_x,u_x)u_x\ra dx\right)\nn\\
&&{}+\beta\left(2\int\la\nabla_x^2u_x,\nabla_x^3u_x\ra dx
+\int\la\nabla_x^3u_x,R(u_x,Ju_x)Ju_x\ra dx\right.\nn\\
&&{}+2\int\la\nabla_x^2u_x,R(\nabla_x u_x,u_x)u_x\ra dx\nn\\
&&{}+\left.\int\la R(u_x,Ju_x)Ju_x,R(\nabla_x u_x,u_x)u_x\ra dx\right).
\end{eqnarray}

For the second term of the right hand side, integrating by parts yields
\begin{eqnarray}\label{eq:4.9}
&&{}-2\epa\int\la\nabla_x^3u_x,R(\nabla_x u_x,u_x)u_x\ra dx\nn\\
&=&2\epa\int\la\nabla_x^2u_x,(\nabla_x R)(\nabla_x u_x,u_x)u_x\ra dx+2\epa\int\la\nabla_x^2u_x,R(\nabla_x^2 u_x,u_x)u_x\ra dx\nn\\
&&{}+2\epa\int\la\nabla_x^2u_x,R(\nabla_x u_x,u_x)\nabla_x u_x\ra dx\nn\\
&\leqslant& C(\Omega)(\int|\nabla_x^2u_x||\nabla_xu_x||u_x|^3dx+\int|\nabla_x^2u_x||u_x|^2dx
+\int|\nabla_x^2u_x||\nabla_xu_x|^2|u_x|dx).
\end{eqnarray}

For the Schr\"odinger part on the right of (\ref{eq:4.7}), we have
\begin{eqnarray}\label{eq:4.01}
&&{}\al\left(2\int \la \nabla_x^3 u_x,J\nb_xu_x\ra dx+2\int\la J\nb_xu_x, R(\nabla_x
u_x,u_x)u_x\ra dx\right)\nn\\
&=&-2\al\int \la \nabla_x^2 u_x,J\nb^2_xu_x\ra dx-2\al\int\la \nb_xu_x, R(\nabla_x
u_x,u_x)Ju_x\ra dx\nn\\
&=&-2\al\int\la \nb_xu_x, R(\nabla_x
u_x,u_x)Ju_x\ra dx\nn\\
&\leqslant& C(\Omega,\al)\int|\nb_xu_x|^2|u_x|^2.
\end{eqnarray}

For the KdV part in (\ref{eq:4.7}), after integrating by parts we have
\begin{eqnarray}\label{eq:4.8}
&&{}\beta\left(2\int\la\nabla_x^2u_x,\nabla_x^3u_x\ra dx
+\int\la\nabla_x^3u_x,R(u_x,Ju_x)Ju_x\ra dx\right.\nn\\
&&{}+2\int\la\nabla_x^2u_x,R(\nabla_x u_x,u_x)u_x\ra dx\nn\\
&&{}+\left.\int\la R(u_x,Ju_x)Ju_x,R(\nabla_x u_x,u_x)u_x\ra dx\right)\nn\\
&\leqslant&C(\Omega)(\int|\nabla_x^2u_x||\nabla_xu_x||u_x|^2dx
+\int|\nabla_x^2u_x||u_x|^4dx+\int|\nabla_xu_x||u_x|^5dx).
\end{eqnarray}

Using H\"{o}lder inequality and interpolation inequalities,
 Eqs. (\ref{eq:4.7})$-$(\ref{eq:4.8}) yield
\begin{eqnarray}\label{eq:4.10}
{d\over dt}\int |\nabla_x u_x|^2
dx+2\epa\int|\nabla_x^3u_x|^2dx\leqslant C(\Omega)||u_x||^4_{H^2}.
\end{eqnarray}

Next we compute $\int|\nabla_x^2u_x|^2dx$. We have
\begin{eqnarray}\label{eq:4.11}
&&{}{d\over dt}\int|\nabla_x^2u_x|^2dx=2\int\la\nabla_t\nabla_x^2u_x,\nabla_x^2u_x\ra dx\nn\\
&=&2\int\la\nabla_x\nabla_t\nabla_x u_x,\nabla_x^2u_x\ra dx+2\int\la R(u_t,u_x)\nabla_x u_x,\nabla_x^2u_x\ra dx\nn\\
&=&-2\int\la\nabla_x^2u_t,\nabla_x^3u_x\ra dx-2\int\la\nabla_x^3u_x,R(u_t,u_x)u_x\ra dx\nn\\
&&{}+2\int\la\nabla_x^2u_x,R(u_t,u_x)\nabla_x u_x\ra dx\nn\\
&=&-2\int\la u_t,\nabla_x^5u_x\ra dx-2\int\la u_t,R(\nabla_x^3u_x,u_x)u_x\ra dx\nn\\
&&{}+2\int\la u_t,R(\nabla_x^2u_x,\nabla_x u_x)u_x\ra dx.
\end{eqnarray}
 Substituting (\ref{eq:4.2}) into (\ref{eq:4.11}) while noting that
 $$\int\la J\nb_xu_x,\nb_x^5u_x\ra dx=\int\la J\nb^3_xu_x,\nb_x^3u_x\ra dx=0;$$
 and
 $$\int\la\nabla_x^3u_x,\nabla_x^5u_x\ra dx=-\int|\nabla_x^4u_x|^2dx, \quad\int\la\nabla_x^2u_x,\nabla_x^5u_x\ra dx=0,$$
we have
\begin{eqnarray}\label{eq:4.12}
&&{d\over dt}\int|\nabla_x^2u_x|^2dx+2\epa\int|\nabla_x^4u_x|^2dx\nn\\
&=&2\epa\int \la\nabla_x^3u_x,R(\nabla_x^3u_x,u_x)u_x\ra dx
-2\epa\int\la\nabla_x^3u_x,R(\nabla_x^2u_x,\nabla_x u_x)u_x\ra dx+I_0,
\end{eqnarray}
where
\begin{eqnarray}
I_0&=&-\al\left(2\int \la J\nabla_xu_x,R(\nabla_x^3u_x,u_x)u_x\ra dx
-2\int\la J\nabla_xu_x,R(\nabla_x^2u_x,\nabla_x u_x)u_x\ra dx\right)\nn\\
&&{}-\beta\left(\int\la \nabla_x^5u_x,R(u_x,Ju_x)Ju_x\ra dx
+2\int\la\nabla_x^2u_x,R(\nabla_x^3u_x,u_x)u_x\ra dx\right.\nn\\
&&{}+\int \la R(u_x,Ju_x)Ju_x,R(\nabla_x^3u_x,u_x)u_x\ra dx
-2\int\la\nabla_x^2u_x,R(\nabla_x^2u_x,\nabla_x u_x)u_x\ra dx\nn\\
&&{}-\left.\int\la R(u_x,Ju_x)Ju_x,R(\nabla_x^2u_x,\nabla_x u_x)u_x\ra dx\right).\nn
\end{eqnarray}
For the first term of (\ref{eq:4.12}), integrating by parts yields
\begin{eqnarray}\label{eq:4.13}
&&{}\quad 2\epa\int \la\nabla_x^3u_x,R(\nabla_x^3u_x,u_x)u_x\ra dx\nn\\
&=&-2\epa\int \la\nabla_x^3u_x,(\nabla_x R)(\nabla_x^2u_x,u_x)u_x\ra dx-2\epa\int \la\nabla_x^4u_x,R(\nabla_x^2u_x,u_x)u_x\ra dx\nn\\
&&{}-2\epa\int \la\nabla_x^3u_x,R(\nabla_x^2u_x,u_x)\nabla_x u_x\ra dx
-2\epa\int \la\nabla_x^3u_x,R(\nabla_x^2u_x,\nabla_x u_x)u_x\ra dx.
\end{eqnarray}
Thus, for any $\delta>0$, (\ref{eq:4.13}) together with the second term of
(\ref{eq:4.12}) yields
\begin{eqnarray}\label{eq:4.14}
&&2\epa\int \la\nabla_x^3u_x,R(\nabla_x^3u_x,u_x)u_x\ra dx-2\epa\int\la\nabla_x^3u_x,R(\nabla_x^2u_x,\nabla_x u_x)u_x\ra dx\nn\\
&\leqslant&\epa\delta \int|\nabla_x^4u_x|^2dx+4\epa\delta \int|\nabla_x^3u_x|^2dx\nn\\
&&{}+{\epa\over 2\delta}\left(\int|(\nabla_x R)(\nabla_x^2u_x,u_x)u_x|^2dx
+\int|R(\nabla_x^2u_x,u_x)u_x|^2dx\right.\nn\\
&&{}+\int|R(\nabla_x^2u_x,u_x)\nabla_x u_x|^2dx
\left.+2\int|R(\nabla_x^2u_x,\nabla_x u_x)u_x|^2dx\right)\nn\\
&\leqslant&\epa\delta \int|\nabla_x^4u_x|^2dx+4\epa\delta \int|\nabla_x^3u_x|^2dx
+{C(\Omega)\over2\delta}\int|\nabla_x^2u_x|^2(|u_x|^6+|u_x|^4+|\nabla_x u_x|^2|u_x|^2)dx\nn\\
&\leqslant&\epa\delta \int|\nabla_x^4u_x|^2dx+4\epa\delta
\int|\nabla_x^3u_x|^2dx+{C(\Omega)\over2\delta}(||u_x||^4_{H^2}+||u_x||^6_{H^2}+||u_x||^8_{H^2}),
\end{eqnarray}
where we used the following interpolation inequalities
\begin{eqnarray*}
||u_x||_{L^\infty}&\leqslant& C(\Omega)(||\nabla_x u_x||^2_{L^2}+||u_x||^2_{L^2})^{1\over4}||u_x||^{1\over2}_{L^2};\nn\\
||\nabla_x u_x||_{L^\infty}&\leqslant& C(\Omega)(||\nabla_x^2
u_x||^2_{L^2}+||\nabla_xu_x||^2_{L^2})^{1\over4}||\nabla_xu_x||^{1\over2}_{L^2}.\nn
\end{eqnarray*}
For the left part $I_0$ of (\ref{eq:4.12}),
after integrating by parts repeatedly (see \cite{SW} for detail), we could obtain that
\begin{eqnarray}\label{eq:4.155}
I_0&\leqslant& C(\Omega,\al)\left(\int|\nabla^2_xu_x||\nabla_xu_x|^2|u_x|dx
+\int|\nabla^2_xu_x|^2|u_x|^2dx\right.\nn\\
&&{}+\left.\int|\nabla^2_xu_x||\nabla_xu_x||u_x|^3dx\right)\nn\\
&&{}+C(\Omega,\beta)\left(\int|\nabla^2_xu_x|^2|\nabla_xu_x||u_x|dx
+\int|\nabla^2_xu_x||\nabla_xu_x|^3dx\right.\nn\\
&&{}+\int|\nabla^2_xu_x|^2|u_x|^3dx+\int|\nabla^2_xu_x||\nabla_xu_x|^2|u_x|^2dx\nn\\
&&{}+\left.\int|\nabla^2_xu_x||\nabla_xu_x||u_x|^4dx\right).
\end{eqnarray}
Thus, by H\"{o}lder inequality and interpolation inequalities, we obtain the desired bound
 $$I_0\leqslant C(\Omega,\al,\beta)\sum_{l=2}^4 ||u_x||^{2l}_{H^2}.$$

Hence we have
\begin{eqnarray}\label{eq:4.16}
&&{d\over dt}\int|\nabla_x^2u_x|^2dx+2\epa\int|\nabla_x^4u_x|^2dx\nn\\
&\leqslant&\epa\delta \int|\nabla_x^4u_x|^2dx+4\epa\delta\int|\nabla_x^3u_x|^2dx
+C(\Omega,\al,\beta)({1\over2\delta}+1)\sum_{l=2}^4 ||u_x||^{2l}_{H^2}.
\end{eqnarray}
In view of (\ref{eq:4.5}), (\ref{eq:4.10}), and (\ref{eq:4.16}), we have
\begin{eqnarray}
&&{d\over dt}||u_x||^2_{H^2}+(2-\delta)\epa\int|\nabla_x^4u_x|^2dx+(1-4\delta)\epa\int|\nabla_x^3u_x|^2dx\nn\\
&\leqslant& C(\Omega,\al,\beta)({1\over2\delta}+1)\sum_{l=2}^4 ||u_x||^{2l}_{H^2}.\nn
\end{eqnarray}
Let $\delta={1\over8}$ and we get the desired inequality (\ref{eq:4.4}).\\

For $3\leqslant m\leqslant k-1$, by the similar process, we could
get the following inequality
\begin{eqnarray}\label{eq:4.17}
{d\over dt}||u_x||^2_{H^{m}}\leqslant C(\Omega,
||u_x||_{H^{m-1}},\al,\beta)||u_x||^2_{H^{m}},
\end{eqnarray}
where $C(\Omega, ||u_x||_{H^{m-1}},\al,\beta)$ depends on $\al$, $\beta$, $||u_x||_{H^{m-1}}$
and the bounds on the curvature $R$ and its covariant derivatives
$\nabla^lR$ with $l\leq m+1$ on $\Omega\subset N$. We omit the details of the proof.\\

Let $f(t)=||u_x||^2_{H^2}+1$, then we have
\begin{eqnarray}\label{eq:4.18}
{df\over dt}\leqslant C(\Omega,\al,\beta)f^4,\quad f(0)=||u_{0x}||^2+1.
\end{eqnarray}
It follows from (\ref{eq:4.18}) that there exists constants $T_0>0$
and $C_0>0$ such that
\begin{eqnarray*}
||u_{x}||_{H^2}\leqslant C_0,\quad t\in[0,\text{min}(T_0,T')].
\end{eqnarray*}
Now let $T=\text{min}(T_0,T')$. If $m=3$, by the Gronwall
inequality, we can obtain from (\ref{eq:4.17}):
\begin{eqnarray*}
||u_{ x}||_{H^3}\leqslant C_1(\Omega, T, ||u_{0x}||_{H^3},\al,\beta),\quad
\text{for all}\quad t\in[0,T].
\end{eqnarray*}
Then by induction we have that there exists a constant
$C_{m-2}(\Omega, ||u_{0x}||_{H^m})>0$, such that for any $3\leqslant
m\leqslant k-1$
\begin{eqnarray}\label{eq:4.19}
\text{ess sup}_{t\in[0,T]}||u_x||_{H^m}\leqslant C_{m-2}(\Omega,
||u_{0x}||_{H^m},\al,\beta).
\end{eqnarray}
Since $\Omega$ is compact, consequently $||u(t)||_{L^\infty(S^1)}$
is uniformly bounded for $t\in[0,T]$.\\

If $N$ is of uniform bounds on the curvature tensor and its
derivatives of any order, it is easy to see from the above arguments
that $T=T_0$ since the coefficients of the above differential
inequalities depend only on the bounds on Riemann curvature tensor
$R$ and its covariant derivatives $\nabla^l R$ of some order on $N$.
That is $T=T(S,||u_{0}||_{H^3})$ depends only on $N$, $u_0$, not on
$0<\epa<1$.

Now we consider the case $N$ is a noncompact, complete K\"ahler
manifold without the bounded geometry assumptions. Note that a
positive lower bound of $T'$ can also be derived from
(\ref{eq:4.19}) when $k\geq 5$. Indeed, it is easy to see from the approximate equation
of Schr\"odinger-Airy flow and the interpolation inequality  that (\ref{eq:4.19}) implies
$$\text{ess sup}_{t\in[0,T]}||u_t||_{L^2(S^1, TN)}\leqslant C(\Omega, \|u_{0x}\|_{H^3}).$$
On the other hand, from the approximate equation of Schr\"odinger-Airy flow we have
$$\nabla_xu_t=-\epa\nabla_x^4u_x +\al J\nb_x^2u_x+\beta\left(\nabla_x^3u_x + \frac{1}{2} \nabla_x(R(u_x, Ju_x)Ju_x)\right).$$
Hence, when $k\geq5$ we infer from (\ref{eq:4.19}) and the interpolation
inequality that
$$\text{ess sup}_{t\in[0,T]}||u_t||_{H^1(S^1, TN)}\leqslant C(\Omega, \|u_{0x}\|_{H^4},\al,\beta).$$

However, by the interpolation inequality, for some $0<a<1$ there holds
\begin{eqnarray*}
||u_t(s)||_{L^\infty}\leqslant C||u_t(s)||^a_{H^1}||u_t(s)||^{1-a}_{L^2}.
\end{eqnarray*}
This implies that, for some $\mathcal{M}>0$, there holds true
$$\text{ess sup}_{t\in[0,T]}||u_t||_{L^\infty}\leqslant \mathcal{M}.$$
Thus we have
\begin{eqnarray*}
\sup_{x\in S^1} d_N(u(x,t), u_0(x)) \leqslant \mathcal{M}t, \quad
\text {for}\quad t<T.
\end{eqnarray*}
If $T'>T_0$ we get the lower bound, so we may assume that
$T'\leqslant T_0$. Then letting $t\rightarrow T'$ in the above
inequality we get $\mathcal{M} T'\geqslant1$. Therefore, if we set
$T=\text{min}\{{1\over\mathcal{M}}, T_0\},$ then the desired
estimates hold for $t\in[0,T].$

It is easy to find that the solution to (\ref{eq:4.2}) with
$\epa\in(0,1)$ must exists on the time interval $[0,T]$. Otherwise,
we always extend the time interval of existence to cover $[0,T]$,
i.e., we always have $T_\epa\geqslant T$. Thus we complete the proof
of this lemma.
\end{proof}

\begin{lem}\label{thm:1.1}
If $(N, J, h)$ is a complete K\"ahler manifold with uniform bounds
on the curvature tensor and its covariant derivatives of any order
$($i.e., $|\nabla^lR|\leqslant B_l$, $l=0, 1, 2, \cdots$$)$, then, for
any integer $k\geqslant3$ the Cauchy problem of (\ref{eq:1.15}) with
the initial value map $u_0\in H^k(S^1, N)$ admits a local solution
$u\in L^\infty([0,T],H^k(S^1, N))$, where $T=T(N,||u_0||_{H^3})$.

\end{lem}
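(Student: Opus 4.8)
The plan is to pass to the limit $\epa\to 0$ in the approximate equation (\ref{eq:4.2}), using the uniform-in-$\epa$ bounds already supplied by Lemma \ref{lm:4.1}(i). Since $N$ has bounded geometry (all $|\nabla^lR|\leqslant B_l$), part (i) of that lemma gives a time $T=T(N,\|u_0\|_{H^3})$ independent of $\epa\in(0,1)$, together with the uniform estimate $\|u_\epa(t)\|_{H^{m+1}}\leqslant C(\|u_0\|_{H^{m+1}})$ for $2\leqslant m\leqslant k-1$ and all $t\in[0,T]$. In particular $\{u_\epa\}$ is bounded in $L^\infty([0,T],H^k(S^1,N))$ uniformly in $\epa$, which is exactly the supply of compactness we need. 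The task is then purely to extract a convergent subsequence and verify that the limit solves (\ref{eq:1.15}).

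First I would use the uniform $H^k$-bound to obtain weak-$\ast$ compactness: a subsequence $u_{\epa_j}\rightharpoonup u$ weakly-$\ast$ in $L^\infty([0,T],H^k)$. To upgrade this to strong convergence sufficient for passing to the limit in the nonlinear (curvature) terms, I would derive a uniform bound on the time derivative. From the approximate equation, $\|\partial_t u_\epa\|_{H^{k-3}}$ is controlled by $\epa\|\nabla_x^3 u_x\|_{H^{k-3}}+\|u_\epa\|_{H^k}$-type quantities, all uniformly bounded on $[0,T]$; note the $\epa$-term stays bounded because $\epa<1$ and $u_\epa$ is uniformly bounded in $H^k$. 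With $u_\epa$ bounded in $L^\infty([0,T],H^k)$ and $\partial_t u_\epa$ bounded in $L^\infty([0,T],H^{k-3})$, the Aubin--Lions--Simon lemma yields strong convergence $u_{\epa_j}\to u$ in $C([0,T],H^{k-1}_{loc})$, hence pointwise convergence of $u_\epa$, $u_x$, $\nabla_x u_x$, $\nabla_x^2 u_x$ along the subsequence (since $k\geqslant3$ controls up through $\nabla_x^2 u_x$ strongly). This strong convergence of the low-order derivatives is what lets the curvature terms $R(u_x,J_uu_x)J_uu_x$ and the metric/connection coefficients converge to their limiting values.

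Next I would pass to the limit term by term in (\ref{eq:4.2}). The dissipative term $-\epa\nabla_x^3 u_x$ vanishes as $\epa\to 0$: it is bounded in $L^\infty([0,T],H^{k-3})$ uniformly, so multiplied by $\epa$ it tends to zero. The Schr\"odinger term $\al J_u\nabla_x u_x$ and KdV terms $\beta(\nabla_x^2 u_x+\tfrac12 R(u_x,J_uu_x)J_uu_x)$ converge in the sense of distributions because the strong convergence of $u_\epa,u_x,\nabla_xu_x,\nabla_x^2u_x$ together with smoothness of $J$, $h$ and $R$ on the compact set $\bar\Omega$ makes these algebraic/differential expressions converge. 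Testing against a smooth time-space test section and integrating by parts to move derivatives off $u_\epa$, the limit $u$ satisfies (\ref{eq:1.15}) weakly; the regularity $u\in L^\infty([0,T],H^k)$ comes from weak-$\ast$ lower semicontinuity of the $H^k$-norm. Finally, $u$ stays inside $\Omega$ (hence into $N$) because the uniform $\|u_t\|_{L^\infty}$-type control forces $u(\cdot,t)$ to remain within distance $1$ of $u_0(S^1)$ on the interval $[0,T]$, as in the argument concluding the proof of Lemma \ref{lm:4.1}.

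The main obstacle I anticipate is the passage to the limit in the curvature term rather than the functional-analytic compactness, which is routine. One must ensure that the pullback geometric quantities ($J_u$, the Christoffel symbols implicit in $\nabla_x$, and $R$ evaluated at $u$) converge strongly enough that products like $R(u_x,J_uu_x)J_uu_x$ pass to the limit; this is where the strong $C([0,T],H^{k-1}_{loc})$ convergence and the confinement of the image to the compact set $\bar\Omega$ (on which all geometric tensors and their derivatives are uniformly bounded) are essential. A secondary point requiring care is that the extracted solution inherits only $L^\infty$-in-time regularity at the top order $k$; continuity in time at the top level is not claimed here (that refinement is the content of Theorem \ref{thm:1.3}), so I would state the conclusion exactly as $u\in L^\infty([0,T],H^k(S^1,N))$ and not attempt to prove $C([0,T],H^k)$ in this lemma.
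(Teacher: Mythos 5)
Your proposal is correct in substance and rests on the same pillars as the paper's proof: the uniform-in-$\epa$ a priori estimates of Lemma \ref{lm:4.1}(i), weak-$*$ compactness in $L^\infty([0,T],H^k)$, a bound on the time derivative in a lower-order space to get strong convergence of low-order derivatives, and a term-by-term passage to the limit using the boundedness of $J$, $R$ and the projection $P$ on the compact set $\bar\Omega$. Where you differ is in the organization of the limit. The paper performs a two-stage limit: it first sends $\epa\to0$ only for \emph{smooth} initial data, where the estimates hold in every $W^{p,2}$ and the approximating solutions converge in $C^p$ for every $p$, producing smooth solutions of (\ref{eq:1.15}); it then mollifies a general $u_0\in H^k$ by smooth maps $u_{i0}$ and extracts the solution as a limit of the resulting smooth solutions $u_i$ of the limit equation itself, obtaining compactness from $du_i/dt\in L^2([0,T],W^{k-3,2})$, H\"older continuity in time, interpolation, and Rellich plus Ascoli--Arzel\`a. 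You instead run a single Aubin--Lions argument on the family $u_\epa$ with the rough data directly; this is legitimate because the parabolic regularization (\ref{eq:4.2}) is solvable for $u_0\in W^{k,2}$ with $k\geqslant3$, and it saves one approximation layer. Two small inaccuracies, neither fatal: $\partial_t u_\epa$ is controlled in $H^{k-4}$, not $H^{k-3}$, since the term $\epa\nabla_x^3u_x$ costs four derivatives of $u$ (harmless for Aubin--Lions, which only needs boundedness in some fixed lower-order space); and for $k=3$ the strong convergence in $C([0,T],H^{k-1})$ does \emph{not} give pointwise convergence of the third-order quantity $\nabla_x^2u_x$ as you claim --- that term converges only weakly, which still suffices because in the embedded picture it equals $P(u)u_{xxx}$ plus lower-order terms, pairing a weakly convergent factor with strongly convergent coefficients, exactly as in the paper's verification step.
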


Before proving Lemma \ref{thm:1.1}, we remark that in \cite{YD}, Ding and Wang have shown that the $H^{m}$
norm of section $\nabla u$ defined in section 1.3  is equivalent to
the usual Sobolev $W^{m+1, 2}$ norm of the map $u$. Precisely, we
have
\begin{lem}\label{lm:4.2} (\cite{YD})
Assume that $N$ is a compact Riemannian manifold with or without
boundary and $m\geqslant 1$. Then there exists a constant $C =
C(N,m)$ such that for all $u \in C^\infty(S^1,N)$,
\[ \Norm{Du}_{W^{m-1,2}} \leqslant C\sum_{i=1}^m \Norm{\nabla u}^i_{H^{m-1,2}} \]
and
\[ \Norm{\nabla u}_{H^{m-1,2}} \leqslant C\sum_{i=1}^m \Norm{Du}^i_{W^{m-1,2}}. \]\\
\end{lem}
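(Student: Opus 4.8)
The plan is to prove the two polynomial equivalences by relating the pull-back covariant derivative $\nabla_x$ on $u^{-1}TN$ to the ambient derivative $D=\partial_x$ via the isometric embedding $N\hookrightarrow\mathbb{R}^n$, and then to close the estimates by Gagliardo--Nirenberg interpolation on $S^1$. The single geometric identity that drives everything is the Gauss formula: for any section $X$ of $u^{-1}TN$, regarded as an $\mathbb{R}^n$-valued function along $u$,
\[ \partial_x X=\nabla_x X+B(u_x,X),\qquad \nabla_x X=P(u)\,\partial_x X, \]
where $B$ is the second fundamental form of $N$ and $P(u)$ is the orthogonal projection of $\mathbb{R}^n$ onto $T_{u}N$. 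Because $N$ is compact, $B$, $P$, the shape operator and all their covariant derivatives are uniformly bounded. The base case $m=1$ is immediate: both sides reduce to $||u_x||_{L^2}$ and coincide pointwise since the embedding is isometric, so $|Du|=|\nabla u|=|u_x|$.

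For the forward inequality I would first establish, by induction on $k$, a structural expansion
\[ \partial_x^k u=\nabla_x^{k-1}u_x+\sum_\iota c_\iota\,\big(\nabla_x^{j^\iota_1}u_x\big)\otimes\cdots\otimes\big(\nabla_x^{j^\iota_{r_\iota}}u_x\big), \]
where each coefficient $c_\iota$ is a uniformly bounded contraction built from $B$, the shape operator and their covariant derivatives, and each multi-index satisfies $r_\iota\ge 2$, $j^\iota_l\le k-2$, together with the derivative budget $\sum_l(j^\iota_l+1)\le k$. The induction step applies $\partial_x$ to a generic term and uses the Gauss, Weingarten and normal connection formulas: each differentiation either raises one covariant order by one (degree unchanged) or spawns a fresh undifferentiated factor $u_x$ through $B$ or the shape operator (degree raised by one), or lands on a coefficient tensor; in every case the total budget is preserved. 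Consequently each correction term is multilinear of degree between $2$ and $k$, while $\nabla_x^{k-1}u_x$ is the unique degree-one term.

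The analytic step is then routine. Since $\sum_l(j^\iota_l+1)\le m$ and $j^\iota_l\le m-2$ for every correction term, I would bound each product in $L^2$ by Hölder together with the Gagliardo--Nirenberg inequalities on $S^1$, estimating $||\nabla_x^{j}u_x||_{L^{p}}\le C||\nabla u||_{H^{m-1,2}}$ with exponents chosen so that $\sum_l 1/p_l=1/2$; here I use Kato's inequality $\big|\partial_x|\nabla_x^{j}u_x|\big|\le|\nabla_x^{j+1}u_x|$ to pass from the intrinsic norm of the section to ordinary Sobolev norms of the scalar functions $|\nabla_x^{j}u_x|$, to which scalar Gagliardo--Nirenberg applies. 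Summing over $k=1,\dots,m$ and matching the number of factors $r_\iota$ to the power gives $||Du||_{W^{m-1,2}}\le C\sum_{i=1}^m||\nabla u||^i_{H^{m-1,2}}$. The reverse inequality is entirely symmetric: using $\nabla_x X=P(u)\partial_x X$ and the boundedness of the derivatives of $P$, one expands $\nabla_x^{k-1}u_x$ as $P(u)\partial_x^k u$ plus bounded multilinear corrections in $\partial_x^j u$ ($j\le k-1$) of degree $\ge2$, and interpolates in the same way to obtain $||\nabla u||_{H^{m-1,2}}\le C\sum_{i=1}^m||Du||^i_{W^{m-1,2}}$.

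The main obstacle is the combinatorial bookkeeping in the structural expansion: across the three differentiation formulas (tangential Gauss term, tangential Weingarten/shape-operator term, and normal connection term) one must verify that every term produced by $\partial_x$ respects the weighted homogeneity, namely degree in $\{2,\dots,k\}$ and total budget $k$. The crucial point is that reaching top covariant order $k-1$ requires spending all order-raising differentiations on a single factor, which is precisely the linear leading term; any correction has already spawned an extra factor, so its highest covariant order is at most $k-2$. This is exactly what guarantees that when $k=m$ the only term at top order $m-1$ is the linear leading term and all remaining terms sit strictly below, so the Gagliardo--Nirenberg estimates close without loss. Once this homogeneity is secured, the interpolation, Kato's inequality and the compactness bounds on the extrinsic geometry finish the proof mechanically.
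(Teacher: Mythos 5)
The paper does not actually prove this lemma: it is quoted directly from \cite{YD} (Ding--Wang), and the argument given there is precisely the one you reconstruct --- relate $D$ and $\nabla_x$ through the isometric embedding via the Gauss--Weingarten formulas, expand $\partial_x^k u$ (respectively $\nabla_x^{k-1}u_x = P(u)\partial_x(\cdots P(u)\partial_x u)$, as in the paper's own identity $\nabla_x^2 u_x = P(u)(P(u))_x u_{xx} + P(u)u_{xxx}$) into a linear leading term plus bounded multilinear corrections of degree at least two respecting the derivative budget, and close with Gagliardo--Nirenberg interpolation on $S^1$, passing from sections to scalars by Kato's inequality. Your proposal is correct, your combinatorial bookkeeping (corrections have order at most $k-2$, total weight at most $k$) is exactly what makes the interpolation close with total exponent strictly below one at top order, and the route is essentially identical to the cited proof, so there is nothing to flag.
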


{\bf\textit{Proof of Lemma \ref{thm:1.1}}}. We assume that
$N$ is compact and imbed $N$ into $\mathbb{R}^n$.
If $u_0:S^1\rightarrow N$ is $C^\infty$, then from Lemma
\ref{lm:4.1} we have that the Cauchy problem (\ref{eq:4.2}) admits a
unique smooth solution $u_\epa$ which satisfies the estimates in
Lemma \ref{lm:4.1}. Hence by Lemma \ref{lm:4.1} and Lemma \ref{lm:4.2} we have that for
any integer $p>0$ and $\epa\in(0,1]$:
\begin{eqnarray}\label{eq:ww}
\sup_{t\in[0,T]} ||u_\epa||_{W^{p,2}(N)}\leqslant C_p(N,u_0),
\end{eqnarray}
where $C_p(N,u_0)$ does not depend on $\epa$. Hence, by sending
$\epa\rightarrow 0$ and applying the embedding theorem of Sobolev
spaces to $u$, we have $u_\epa\rightarrow u\in C^p(S^1\times[0,T])$
for any $p$. It is easy to check that $u$ is a solution to the
Cauchy problem (\ref{eq:4.1}).

If $u_0:S^1\rightarrow N$ is not $C^\infty$, but $u_0\in
W^{k,2}(S^1,N)$, we may always select a sequence of
$C^\infty$ maps from $S^1$ into $N$, denoted by $u_{i0}$, such
that
$$u_{i0}\rightarrow u_0\quad\text{in}\quad W^{k,2},\quad\text{as}\quad i\rightarrow \infty.$$
Thus following from Lemma \ref{lm:4.2} we have
\begin{eqnarray*}
||\nabla_x u_{i0}||_{H^{k-1}}\rightarrow ||\nabla_x u_0||_{H^{k-1}},\quad\text{as}\quad i\rightarrow\infty.
\end{eqnarray*}
Thus there exists a unique, smooth solution $u_i$, defined on time interval $[0,T_i]$, of the Cauchy problem (\ref{eq:4.1}) with $u_0$ replaced
by $u_{i0}$. Furthermore, from the arguments in Lemma \ref{lm:4.1}, we could obtain that if $i$ is large enough, then there exists a uniform
positive lower bound of $T_i$, denoted by $T$, such that the following holds uniformly with respect to large enough $i$:
\begin{eqnarray*}
\sup_{t\in[0,T]} ||\nabla u_i(t)||_{H^{k-1}}\leqslant
C(T,||u_{0x}||_{H^{k-1}}).
\end{eqnarray*}
Hence from Lemma \ref{lm:4.2} we deduce
\begin{eqnarray}\label{eq:www}
\sup_{t\in[0,T]} ||Du_i(t)||_{W^{k-1,2}}\leqslant C(T,||u_{0x}||_{W^{k-1,2}}),
\end{eqnarray}
and by (\ref{eq:4.1}) we have
$${du_i\over dt}\in L^2([0,T],W^{k-3,2}(S^1,N)).$$
By Sobolev theorem, it is easy to see that $u_i\in C^{0,{1\over2}}([0,T],W^{k-3,2}(S^1,N)).$

Interpolating the spaces $L^\infty([0,T],W^{k,2}(S^1,N)) $ and $C^{0,{1\over2}}([0,T],W^{k-3,2}(S^1,N))$
yields that
\begin{equation}\label{eq:vv}
u_i\in C^{0,\gamma}([0,T],W^{k-6\gamma,2}(S^1,N))\quad\text{for}\quad \gamma\in(0,{1\over2}).
\end{equation}
Therefore when letting $\gamma$ small while using Rellich's theorem and the Ascoli-Arzela theorem,
 from (\ref{eq:www}) and (\ref{eq:vv}) we obtain that there exists $$u\in
L^\infty([0,T],W^{k,2}(S^1,N))\cap C([0,T],W^{k-1,2}(S^1,N))$$ such that
\begin{eqnarray*}
u_i&\rightarrow& u\quad[\text{weakly}^*]\quad\text{in}\quad
L^\infty([0,T],W^{k,2}(S^1,N)),\nn\\
u_i&\rightarrow& u\quad\text{in}\quad C([0,T],W^{k-1,2}(S^1,N))
\end{eqnarray*}
upon extracting a subsequence and re-indexing if necessary.

It remains to verify that $u$ is a strong solution to (\ref{eq:4.1}).
We need to check that for any $v\in C^\infty(S^1\times[0,T],\mathbb{R}^n)$ there holds
\begin{eqnarray*}
\int_0^T\int_{S^1}\la u_t,v\ra dx dt&=&\al\int_0^T\int_{S^1}\la J_u\nabla_xu_x,v\ra dx dt\\
&&{}+\beta\left(\int_0^T\int_{S^1}\la \nabla^2_xu_x,v\ra dx dt +
{1\over2}\int_0^T\int_{S^1}\la R_u(u_x,J_uu_x)J_uu_x,v\ra dx dt\right).
\end{eqnarray*}

First we always have that for each $u_i$
\begin{eqnarray*}
\int_0^T\int_{S^1}\la u_{it},v\ra dx dt&=&\al\int_0^T\int_{S^1}\la J_{u_i}\nabla_xu_{ix},v\ra dx dt\\
&&{}+\beta\left(\int_0^T\int_{S^1}\la \nabla^2_xu_{ix},v\ra dx dt +
 {1\over2}\int_0^T\int_{S^1}\la R_{u_i}(u_{ix},J_{u_i}u_{ix})J_{u_i}u_{ix},v\ra dx dt\right).
\end{eqnarray*}
For each $y\in N\subset\mathbb{R}^n$, let $P(y)$ be the orthogonal
projection from $\mathbb{R}^n$ onto $T_yN$, we have
\begin{eqnarray}
\nabla_x u_x&=&P(u)u_{xx},\nn\\
\nabla^2_xu_x&=&P(u)(D(P(u)u_{xx}))\nn\\
&=&P(u)(P(u))_xu_{xx}+P(u)u_{xxx},\label{eq:4.23}\\
\nabla^2_xu_{ix}&=&P(u_i)(P(u_i))_xu_{ixx}+P(u_i)u_{ixxx}.\label{eq:4.24}
\end{eqnarray}

Hence we have
\begin{eqnarray}\label{eq:4.22}
&&{}\int_0^T\int_{S^1}|\la J_u\nabla_xu_x,v\ra- \la J_{u_i}\nabla_xu_{ix},v\ra| dx dt\nn\\
&\leqslant& \int_0^T\int_{S^1}|(J_u-J_{u_i})P(u)u_{xx},v\ra| dx dt+
\int_0^T\int_{S^1}|J_{u_i}(P(u)-P(u_i))u_{xx},v\ra| dx dt\nn\\
&&{}+\int_0^T\int_{S^1}|J_{u_i}P(u_i)(u_{xx}-u_{ixx}),v\ra| dx dt;
\end{eqnarray}
and
\begin{eqnarray}\label{eq:4.26}
&&{}\int_0^T\int_{S^1}|\la\nabla^2_xu_x,v\ra- \la\nabla^2_xu_{ix},v\ra| dx dt\nn\\
&\leqslant&\int_0^T\int_{S^1}|\la\big( P(u)-P(u_i)\big)u_{xxx}, v\ra|dx dt+\int_0^T\int_{S^1}|\la P(u_i)\big(u_{xxx}-u_{ixxx}\big), v\ra|dx dt\nn\\
&&{}+ \int_0^T\int_{S^1}|\la \big(P(u)(P(u))_x-P(u_i)(P(u_i))\big)_xu_{xx},v\ra|dx dt\nn\\
&&{}+ \int_0^T\int_{S^1}|\la P(u_i)(P(u_i))_x\big(u_{xx}-u_{ixx}\big),v\ra|dx dt.
\end{eqnarray}
Moreover,
\begin{eqnarray}\label{eq:4.25}
&&{}\int_0^T\int_{S^1}|\la R_u(u_x,J_uu_x)J_uu_x,v\ra-\la R_{u_i}(u_{ix},J_{u_i}u_{ix})J_{u_i}u_{ix},v\ra| dx dt\nn\\
&\leqslant&\int_0^T\int_{S^1}| R_u(u_x,J_uu_x)J_uu_x-R_{u_i}(u_{ix},J_{u_i}u_{ix})J_{u_i}u_{ix}||v|dx dt.
\end{eqnarray}
Since $N$ is compact, it is obviously that
$$ ||R(\cdot)||_{L^\infty(N)}<\infty\quad\text{and}\quad||P(\cdot)D(P(\cdot))||_{L^\infty(N)}<\infty.$$
Hence we obtain that each term on the right hand side of (\ref{eq:4.22})$-$(\ref{eq:4.25}) converges zero as $i$ goes
to infinity. This implies that
\begin{eqnarray*}
&&{}\lim_{i\rightarrow \infty}\int_0^T\int_{S^1}\la J_{u_i}\nabla_xu_{ix},v\ra dx dt=\int_0^T\int_{S^1}\la J_u\nabla_xu_x,v\ra dx dt;\\
&&{}\lim_{i\rightarrow \infty}\int_0^T\int_{S^1}\la \nabla^2_xu_{ix},v\ra dx dt=\int_0^T\int_{S^1}\la \nabla^2_xu_x,v\ra dx dt;\\
&&{}\lim_{i\rightarrow \infty}\int_0^T\int_{S^1}\la R_{u_i}(u_{ix},J_{u_i}u_{ix})J_{u_i}u_{ix},v\ra dx dt=\int_0^T\int_{S^1}\la R_u(u_x,J_uu_x)J_uu_x,v\ra dx dt.
\end{eqnarray*}
On the other hand, we also have
\begin{eqnarray*}
\lim_{i\rightarrow \infty}\int_0^T\int_{S^1}\la u_{it},v\ra dx dt=-\int _0^T\int_{S^1}\la u,v_t\ra dx dt
+\int_{S^1}(\la u(T),v(T)\ra-\la u_0,v(0)\ra) dx dt.
\end{eqnarray*}
Thus, from the above equalities we have
\begin{eqnarray}\label{eq:4.27}
&&{}\al\int_0^T\int_{S^1}\la J_u\nb_xu_x,v\ra dx dt+\beta\left(
\int_0^T\int_{S^1}\la \nabla^2_xu_x,v\ra dx dt +
{1\over2}\int_0^T\int_{S^1}\la R(u_x,Ju_x)Ju_x,v\ra dx dt\right)\nn\\
&=&-\int _0^T\int_{S^1}\la u,v_t\ra dx dt+\int_{S^1}(\la u(T),v(T)\ra-\la u_0,v(0)\ra) dx dt.
\end{eqnarray}
Note that $\nabla^2_x u_x\in L^2(S^1\times[0,T], \mathbb{R}^n)$,
thus (\ref{eq:4.27}) implies $u_t\in
L^2(S^1\times[0,T],\mathbb{R}^n)$. Therefore for any smooth function
$v$ we always have
\begin{eqnarray*}
\int_0^T\int_{S^1}\la u_t,v\ra dx dt&=&\al\int_0^T\int_{S^1}\la J_u\nb_xu_x,v\ra dx dt\nn\\
&&{}+\beta\left(\int_0^T\int_{S^1}\la \nabla^2_xu_x,v\ra dx dt
 + {1\over2}\int_0^T\int_{S^1}\la R(u_x,Ju_x)Ju_x,v\ra dx dt\right),
\end{eqnarray*}
which means that $u$ is a strong solution of (\ref{eq:4.1}).

It is easy to see that if $N$ is noncompact manifold with
bounded geometry and the domain is $S^1$, we could find a compact
subset of $N$, denoted by $\Omega$, such that $u_0(S^1)\subset
\Omega\subset \mathbb{R}^n$. Then we could repeat the same process
as in the case $N$ is compact (also see \cite{Mc}), then we could
obtain the same results.

Hence we complete the proof of Lemma \ref{thm:1.1}. Moreover, when
$k\geqslant4$, Theorem \ref{thm:1.3} bellow asserts that the
solutions are unique.
\hspace*{\fill}$\Box$\\

\noindent Now we prove Theorem \ref{thm:1.3}
and show the uniqueness of the solutions.\\

\noindent{\bf\textit{Proof of Theorem \ref{thm:1.3}}}.
Without loss of generality, we may assume that $N$ is compact, since
$u(x, t)\in L^\infty([0, T], H^4(S^1, N))$ implies that $\{u(x, t): (x, t)
\in S^1\times[0, T]\}\subset\subset N$. We regard $N$ as a submanifold of
$\mathbb{R}^n$. Let $u$, $v:S^1\times[0,T]\rightarrow N\subset\mathbb{R}^n$
be two solutions of (\ref{eq:4.1}) such that $u(x,0)=v(x,0)=u_0$ and
$u,v\in L^\infty([0,T],W^{k,2}(S^1,N))$ for $k\geqslant4$. Let $w=u-v$ and it
makes sense as a $\mathbb{R}^n$-valued function. It is worthy to point out
that both the curvature tensor $R$ and the complex structure $J$ here should
be regarded as operators on $\mathbb{R}^n$, such that $R(u)(u_x,J_uu_x)J_uu_x-R(v)(v_x,J_vv_x)J_vv_x$ makes sense
in $\mathbb{R}^n$.

By the discussion, in Lemma \ref{lm:4.2}, for the solution $u$ of (\ref{eq:1.15}), we have
\begin{eqnarray*}
u_t= \al J_u P(u)u_{xx}+\beta\left(
P(u)u_{xxx}+P(u)(P(u))_xu_{xx}+{1\over2}R(u)(u_x,J_uu_x)J_uu_x\right).
\end{eqnarray*}
Hence we have
\begin{eqnarray}\label{eq:4.45}
w_t&=&\al \Big(J_uP(u)u_{xx}-J_vP(v)v_{xx}\Big)\nn\\
&&{}+\beta \Bigg(P(u)w_{xxx}+[P(u)-P(v)]v_{xxx}\nn\\
&&+P(u)(P(u))_xw_{xx}+[P(u)(P(u))_x-P(v)(P(v))_x]v_{xx}\nn\\
&&{}+{1\over2}(R(u)(u_x,J_uu_x)J_uu_x-R(v)(v_x,J_vv_x)J_vv_x)\Bigg).
\end{eqnarray}

We could show that there exists a constant $C$ which only depends on $N$,
$||u||_{W^{4,2}}$, $||v||_{W^{4,2}}$, $\al$, and $\beta$  such that
\begin{eqnarray}\label{eq:4.29}
{d\over dt}||w||^2_{W^{1,2}}\leqslant C(N,||u||_{W^{4,2}},||v||_{W^{4,2}},\al,\beta) ||w||^2_{W^{1,2}}.
\end{eqnarray}
Then by Gronwall's inequality we could obtain that $w\equiv0$ which yields
 the uniqueness of the solution. We
omit the details about the proof of (\ref{eq:4.29}) since the process is almost
the same with that in \cite{SW}.

Thus it suffices to show that$\nabla^{k-1}_xu_x\in C([0,T];L^2(S^1,TN))$ for $k\geqslant4$. In the proof
of Theorem \ref{thm:1.1} we have seen
that the solution $u\in L^\infty([0,T],H^{k}(S^1,N))\cap C([0,T],H^{k-1}(S^1,N)),$  thus by the discussion about
(\ref{eq:ww}), (\ref{eq:www}) and the equation of Schr\"odinger-Airy flow, we could
easily get that
$${d\over dt}||\nabla^{k-1}_xu_x||^2_{L^2}\leqslant C,$$
which implies that
$$||\nabla_x^{k-1}u_x(t,x)||^2_{L^2(S^1,TN)}\leqslant ||\nabla^{k-1}_xu_x(0,x)||^2_{L^2(S^1,TN)}+Ct.$$

Hence we obtain
$$\lim_{t\rightarrow 0}\sup||\nabla_x^{k-1}u_x(t,x)||^2_{L^2(S^1,TN)}\leqslant ||\nabla^{k-1}_xu_x(0,x)||^2_{L^2(S^1,TN)}.$$

On the other hand, $u\in L^\infty([0,T],H^{k}(S^1,N))\cap C([0,T],H^{k-1}(S^1,N))$ implies that, with respect to $t$,
$\nabla^{k-1}_xu_x(t,x)$ is weakly continuous in $L^2(S^1,TN)$,
we have
$$||\nabla^{k-1}_xu_x(0,x)||^2_{L^2(S^1,TN)}\leqslant \lim_{t\rightarrow0}\inf||\nabla^{k-1}_xu_x(t,x)||^2_{L^2(S^1,TN)}.$$


Thus,
$$\lim_{t\rightarrow0} ||\nabla^{k-1}_xu_x(t,x)||^2_{L^2}=||\nabla^{k-1}_xu_x(0,x)||^2_{L^2},$$
which implies that $\nabla^{k-1}_xu_x(t,x)$ is continuous in $L^2(S^1,TN)$ at $t=0$. Now by the uniqueness of $u(t,x)$,
we get that $\nabla_x^{k-1}u_x(t,x)$ is continuous at each $t\in[0,T]$, i.e. $u\in C([0,T],H^k(S^1,N))$ for all
$k\geqslant4$. However, if $k\leqslant3$, we could not get the continuity of $||u||_{H^k}$ about $t$ on $[0,T]$ without
the uniqueness of $u$. Thus we
complete the proof of Theorem \ref{thm:1.3}. \hspace*{\fill}$\Box$\\

\noindent{\bf\textit{Proof of Theorem \ref{thm:1.4}}}. To
show the existence of the Cauchy problem (\ref{eq:1.15}) with an
initial map $u_0\in H^4(S^1, N)$, we need to consider the following
Cauchy problems:
\begin{equation}\label{eq:4.30}
\left\{
\begin{aligned}
&u_t=\al J\nb_xu_x+\beta\left(
\nabla_x^2u_x+{1\over2}R(u_x,J_uu_x)J_uu_x\right),\quad x\in S^1;\\
&u(x,0)=u_0^i(x).
\end{aligned}\right.
\end{equation}
Here $u_0^i\in C^\infty(S^1, N)$ and $\|u_0^i-u_0\|_{H^4}\rightarrow
0$. By Lemma 3.1 we know that for each $i$ and any  $k\geq
5$, (\ref{eq:4.30}) admits a local solution $u^i\in
L^\infty([0, T^{\max}_i), H^k(S^1, N))$, where
$T_i^{\max}=T_i^{\max}(S^1, \|u_0^i\|_{H^5})$ is the maximal
existence interval of $u^i$.

As $N$ is not compact we let $\Omega_i\triangleq \{p\in
N:\text{dist}_N(p,u_0^i(S^1))<1\}$, which is an open subset of $N$
with compact closure $\bar{\Omega}_i$. Denote
$$\Omega_\infty\triangleq \{p\in
N:\text{dist}_N(p,u_0(S^1))<1\}\quad\mbox{and}\quad\Omega_0
\triangleq \{p\in N: \text{dist}_N(p,\Omega_\infty)<1\}.$$ Since
$\|u_0^i-u_0\|_{H^4}\rightarrow 0$, then $\Omega_i \subset\subset
\Omega_0$ as $i$ is large enough. Let
 $$T'_i=\sup\{t>0: u^i(S^1,t)\subset\Omega_i\}.$$
By the same argument as in Lemma 3.1 we can show that there holds
true for all $t\in[0,T_i]$
\begin{eqnarray*}
{d\over dt}||u_x^i||^2_{H^2}\leqslant C(\Omega_0)\sum_{l=2}^4
||u_x^i||^{2l}_{H^2}.
\end{eqnarray*}
If we let $f^i(t)=||u^i_x||^2_{H^2}+1$, then we have
\begin{eqnarray*}
{df^i\over dt}\leqslant C(\Omega_0)(f^i)^4,\quad f^i(0)=||u^i_{0x}||^2+1.
\end{eqnarray*}
It follows from the above differential inequality that there holds true
$$f^i(t)\leqslant \left(\frac{(f^i(0))^3}{1-3(f^i(0))^3C(\Omega_0)t}\right)^{\frac{1}{3}},$$
as
$$t<\frac{1}{3(f^i(0))^3C(\Omega_0)}.$$
Then, there exists constants
$$T_0^i=T_0^i(\Omega_0, \|u^i_{0x}\|_{H^2})=\frac{1}{4(f^i(0))^3C(\Omega_0)}>0,$$
$$C^i_0= 4^{\frac{1}{3}}f^i(0)>0$$ and
$C^i_{k-2}=C_{k-2}(\|u^i_{0x}\|_{H^{k}})>0$ such that
\begin{eqnarray*}
||u^i_{x}||_{H^2}\leqslant C^i_0,\quad t\in[0,\text{min}(T^i_0,T'_i)].
\end{eqnarray*}
and for $k\geq 3$
\begin{eqnarray*}
||u^i_{x}||_{H^k}\leqslant C^i_{k-2},\quad
t\in[0,\text{min}(T^i_0,T'_i)].
\end{eqnarray*}
Since $\|u_0^i-u_0\|_{H^4}\rightarrow 0$, when $i$ is large enough we have
$$T_0=\frac{1}{4(\|u_{0x}\|_{H^2}^2+1 +\delta_0)C(\Omega_0)}<T^i_0,$$
where $\delta_0$ is a small positive number. It is easy to see that,
as $i$ is large enough,

\begin{equation}\label{eq:3.55*}C^i_0\leq
C_0(\|u_{0x}\|_{H^2})+\delta_0\quad\mbox{and}\quad C^i_1\leq
C_1(\|u_{0x}\|_{H^3})+\delta_0.
\end{equation}

In fact, we always have $T_i^{\max}> \text{min}(T_0,T'_i)$ when $i$
is large enough. Otherwise, by Lemma 3.1 we can find a time-local solution $u_1$ of
(\ref{eq:1.15}) and $u_1$ satisfies the initial value condition
$$u_1(x,T_i^{\max}-\epsilon)=u(x,T_i^{\max}-\epsilon),$$
where $0<\epsilon<T_i^{\max}$ is a small number. Then by the local existence
theorem, $u_1$ exists on the time interval
$(T_i^{\max}-\epsilon,T_i^{\max}-\epsilon+\eta)$ for some constant $\eta>0$. The
uniform bounds on $||u_x||_{H^2}$ and $||\nabla_x^m u_x ||_{L^2}$
(for all $m>2$) implies that $\eta$ is independent of $\epsilon$.
Thus, by choosing $\epsilon$ sufficiently small, we have
$$T_i^e=T_i^{\max}-\epsilon+\eta>T_i^{\max}.$$
By the uniqueness result, we have that $u_1(x,t)=u(x,t)$ for all
$t\in[T_i^{\max}-\epsilon,T_i^e)$. Thus we get a solution of the Cauchy problem
(\ref{eq:1.15}) on the time interval $[0,T_e)$,
which contradicts the maximality of $T_i^{\max}$.

Now we need to show that $T'_i$ have a uniform lower bound as $i$ is large enough.
For each large enough $i$, if $T'_i\geqslant T_0$ we obtain the lower bound. Otherwise,
by the same argument as in Lemma 3.1 we have
$$T'_i\geqslant \frac{1}{\mathcal{M}_i}$$
where $$\mathcal{M}_i= \sup_{[0, \text{min}(T_0,T'_i)]}
||u_t^i||_{L^\infty}\leqslant C \sup_{[0, \text{min}(T_0,T'_i)]}
||u^i_t||^a_{H^1}\sup_{[0, \text{min}(T_0,T'_i)]}
||u^i_t||^{1-a}_{L^2}\equiv M_i.$$ It should be pointed out that to
derive the estimates $L^\infty$ estimates on $||u^i_t(s)||_{L^2}$ and
$||u^i_t(s)||_{H^1}$ we need only to have $u^i\in L^\infty([0,
\text{min}(T_0,T'_i)], H^4(S^1, N))$, since the equation of KdV flow
is a third-order dispersive equation. It is not difficult to see
from (\ref{eq:3.55*}) that there exists a positive constant
$M(\Omega_0, ||u_{0x}||_{H^3})$ such that, as $i$ is large enough,
 $$M_i(\Omega_0, ||u^i_{0x}||_{H^3})\leqslant M(\Omega_0, ||u_{0x}||_{H^3}),$$
since $\|u_0^i-u_0\|_{H^4}\rightarrow 0$.

Let $T^*=\text{min}(T_0, \frac{1}{M})$. As $i$ is large enough, we always have
$u^i \in L^\infty([0, T^*], H^4(S^1, N))$. By letting $i\rightarrow \infty$ and
taking the same arguments as in Theorem 1.1, we know there exists $u \in L^\infty([0, T^*], H^4(S^1, N))$
such that
 $$u^i \rightarrow u\quad[\text{weakly}^*]\quad\text{in}\quad L^\infty([0,T^*],H^4(S^1,N))$$
and $u$ is a local solution to (\ref{eq:4.1}). Theorem \ref{thm:1.3} tells us
that the local solution is unique and the local solution is continuous with
respect to $t$, i.e., $u\in C([0, T^*], H^4(S^1, N))$. Thus, we complete the proof of the
theorem. \hspace*{\fill}$\Box$

\section{Conservation Laws}
In this section we show the conservation laws $E_1(u)$, $E_2(u)$, $E_3(u)$ and
the semi-conservation law $E_4(u)$
introduced in the first section with some special assumptions about $N$.
These will help us to obtain the global existence of the KdV geometric flow in the next section.

First, we recall that a complex manifold $N$ of real dimension $2n$ and
integrable complex structure $J$ is said to be K\"ahler if it possesses a
Riemannian metric $h$ for which $J$ is an isometry, as well as a symplectic
form $\omega$ satisfying the compatibility condition $\omega(X, Y ) = g(JX, Y )$
for all tangent vector fields $X$, $Y\in \Gamma(TN)$ which denotes the space
of smooth sections on $TN$.

Now we derive some conservation laws of the new geometric flow
(\ref{eq:1.15}) on a locally Hermitian symmetric space. The computational process about the KdV part is the same with that in \cite{SW}
thus we omit many details and use the results in \cite{SW} directly.
\begin{lem}\label{lm:3.1}
Assume $N$ is a locally Hermitian symmetric space. If $u:S^1\times (0,T)\rightarrow N$ is a smooth solution of
the Cauchy problem of the Hirota geometric flow (\ref{eq:1.15}), then
$${dE_1\over dt}={1\over 2}{d\over dt}\int|u_x|^2dx=0,$$ in other
words, $E_1(u)=E_1(u_0)$ for all $t\in(0,T)$.
\end{lem}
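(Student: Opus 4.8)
The plan is to show that the energy $E_1(u)=\frac{1}{2}\int_{S^1}|u_x|^2\,dx$ is conserved by differentiating under the integral sign and using the structure of the flow equation \eqref{eq:1.15} together with the symmetries of the curvature tensor on a locally Hermitian symmetric space. First I would compute
\begin{eqnarray*}
\frac{dE_1}{dt}=\int_{S^1}\la u_x,\nabla_t u_x\ra\,dx
=\int_{S^1}\la u_x,\nabla_x u_t\ra\,dx
=-\int_{S^1}\la \nabla_x u_x,u_t\ra\,dx,
\end{eqnarray*}
where the middle equality uses the torsion-free identity $\nabla_t u_x=\nabla_x u_t$ and the last step is integration by parts on the closed curve $S^1$ (no boundary terms appear). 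Substituting the right-hand side of \eqref{eq:1.15} then splits $\frac{dE_1}{dt}$ into three pieces: the $\alpha$ (Schr\"odinger) term $-\alpha\int\la\nabla_x u_x,J_u\nabla_x u_x\ra\,dx$, the $\beta$ term from $\nabla_x^2 u_x$, and the $\beta$ curvature term.

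The key cancellations are then handled one piece at a time. The Schr\"odinger term vanishes pointwise because $J_u$ is skew-symmetric with respect to $h$, so $\la\nabla_x u_x,J_u\nabla_x u_x\ra=0$. For the term $-\beta\int\la\nabla_x u_x,\nabla_x^2 u_x\ra\,dx$, I would observe that the integrand is $-\frac{\beta}{2}\frac{d}{dx}|\nabla_x u_x|^2$, whose integral over $S^1$ is zero. This leaves only the curvature contribution $-\frac{\beta}{2}\int\la\nabla_x u_x,R(u_x,J_u u_x)J_u u_x\ra\,dx$, and the whole statement reduces to showing this integral vanishes.

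The main obstacle, therefore, is the curvature term. I would argue that on a locally Hermitian symmetric space the relevant symmetries force this integrand to vanish or to be a total derivative. Concretely, using the first Bianchi identity, the pair symmetry $\la R(X,Y)Z,W\ra=\la R(Z,W)X,Y\ra$, the compatibility of $R$ with $J$ (i.e. $J$ commutes with $R$ as $\nabla J=0$), and the algebraic identity $R(u_x,J_u u_x)J_u u_x$ being proportional to $u_x$ in the relevant symmetric-space setting, I expect $\la\nabla_x u_x,R(u_x,J_u u_x)J_u u_x\ra$ to reduce to something antisymmetric in a way that integrates to zero; alternatively one writes it as $\frac{d}{dx}$ of a curvature expression using $\nabla R=0$. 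This is the computation I would expect to be most delicate, and it is precisely where the locally Hermitian symmetric hypothesis $\nabla R\equiv 0$ is used. Since the $\beta$-part here is identical in structure to the KdV flow treated in \cite{SW}, I would invoke that computation directly rather than redo it, concluding $\frac{dE_1}{dt}=0$ and hence $E_1(u)=E_1(u_0)$ for all $t\in(0,T)$.
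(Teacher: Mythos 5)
Your proposal is correct and takes essentially the same route as the paper: the $\alpha$-term vanishes pointwise by skew-symmetry of $J$, the term $\la\nabla_xu_x,\nabla_x^2u_x\ra$ integrates to zero as a total derivative, and the remaining curvature term is rewritten (using the symmetries of $R$ and the parallelism of $J$) as $\tfrac18\nabla_x\la u_x,R(u_x,Ju_x)Ju_x\ra$ plus a term involving $\nabla_xR$ that vanishes on a locally Hermitian symmetric space --- which is exactly the computation the paper imports from \cite{SW}. One small correction: your suggested alternative that $R(u_x,J_uu_x)J_uu_x$ is proportional to $u_x$ holds only in the constant holomorphic sectional curvature case, not for a general locally Hermitian symmetric space, so the total-derivative/$\nabla R=0$ mechanism you also name is the one that actually carries the argument.
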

{\textit{Proof}. With the assumption of $N$, we have $\nabla R=0$.
Similar with the computation before, we get
\begin{eqnarray}\label{eq:4.0}
&&{dE_1\over dt}={1\over 2}{d\over dt}\int|u_x|^2dx=-\int\la\nabla_x u_x,u_t\ra dx\nn\\
&=&-\alpha\int\la \nb_x u_x,J\nb_xu_x\ra dx-\beta\left(\int\la\nabla_x u_x,
\nabla_x^2 u_x\ra dx+{1\over2}\int\la \nabla_x u_x,
R(u_x,Ju_x)Ju_x\ra dx\right).\nn\\
\end{eqnarray}

Obviously, by the antisymmetry of $J$, the first part on the right of (\ref{eq:4.0}) vanishes.
The second part vanishes too, since in \cite{SW} we have proved that $E_1$ is
  preserved by KdV geometric flow with the same assumptions. Precisely, we have
\begin{eqnarray*}
&&\beta\left(\int\la\nabla_x u_x,
\nabla_x^2 u_x\ra dx+{1\over2}\int\la \nabla_x u_x,
R(u_x,Ju_x)Ju_x\ra dx\right)\nn\\
&=&{\beta\over2}\int\nabla_x(\la\nabla_x u_x, \nabla_x u_x\ra)
dx+{\beta\over8}\int\nabla_x (\la u_x, R(u_x,Ju_x)Ju_x\ra) dx\\
&&{}-{\beta\over8}\int \la u_x, (\nabla_x R)(u_x,Ju_x)Ju_x\ra dx\\
&=&0.
\end{eqnarray*}
The last equality holds since $\nabla_x R=0$. Hence we have
$${dE_1\over dt}=0.$$
This completes the
proof.\hspace*{\fill}$\Box$\\

\begin{lem}\label{lm:3.2}
Assume $N$ is a is a locally Hermitian symmetric space. If $u:S^1\times (0,T)\rightarrow N$ is a smooth solution of
the Cauchy problem of the geometric Schr\"odinger-Airy flow (\ref{eq:1.15}), the pseudo-helicity of $u$
$$E_2(u)\equiv\int\la\nabla_x u_x,Ju_x\ra dx$$
is preserved too, i.e.
$${dE_2\over dt}=0.$$
\end{lem}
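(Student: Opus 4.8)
The plan is to differentiate $E_2(u)=\int\la\nabla_x u_x,Ju_x\ra\,dx$ in $t$ and reduce everything to a single expression that is \emph{linear} in $u_t$, into which the flow equation can then be substituted term by term. Since $N$ is K\"ahler we have $\nabla J=0$, so $\nabla_t(Ju_x)=J\nabla_t u_x=J\nabla_x u_t$, and hence
$$\frac{dE_2}{dt}=\int\la\nabla_t\nabla_x u_x,Ju_x\ra\,dx+\int\la\nabla_x u_x,J\nabla_x u_t\ra\,dx.$$
Using the commutation formula $\nabla_t\nabla_x u_x=\nabla_x^2 u_t+R(u_t,u_x)u_x$ together with integration by parts on $S^1$ (no boundary terms), both integrals on the right collapse to $\int\la u_t,J\nabla_x^2 u_x\ra\,dx$, so that
$$\frac{dE_2}{dt}=2\int\la u_t,J\nabla_x^2u_x\ra\,dx+\int\la R(u_t,u_x)u_x,Ju_x\ra\,dx.$$
The virtue of this form is that its right-hand side is linear in $u_t$.

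Next I would substitute $u_t=\al J\nabla_x u_x+\beta\bigl(\nabla_x^2u_x+\tfrac12 R(u_x,Ju_x)Ju_x\bigr)$ and split according to the $\al$-part (Schr\"odinger) and the $\beta$-part (KdV). The $\beta$-part is exactly the rate of change of $E_2$ along the KdV geometric flow, which was already shown to vanish in \cite{SW}, so it contributes nothing. For the $\al$-part, the first summand gives $2\al\int\la J\nabla_x u_x,J\nabla_x^2 u_x\ra\,dx=2\al\int\la\nabla_x u_x,\nabla_x^2 u_x\ra\,dx=\al\int\partial_x|\nabla_x u_x|^2\,dx=0$, using that $J$ is an isometry. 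Thus everything reduces to showing that the curvature term coming from the Schr\"odinger part,
$$\al\int\la R(J\nabla_x u_x,u_x)u_x,Ju_x\ra\,dx,$$
vanishes.

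This last identity is the main obstacle, and it is where the locally Hermitian symmetric hypothesis $\nabla R=0$ enters. My plan is to recognize the integrand, up to the factor $\al$, as a piece of the total $x$-derivative of $\la u_x,R(u_x,Ju_x)Ju_x\ra$, whose integral over $S^1$ is zero. Writing $A=\nabla_x u_x$ and $B=u_x$ and expanding $\partial_x\la B,R(B,JB)JB\ra$ with $\nabla R=0$ produces four terms; using the pair symmetry $\la R(X,Y)Z,W\ra=\la R(Z,W)X,Y\ra$, the K\"ahler relations $R(X,Y)JZ=JR(X,Y)Z$ and $R(JX,JY)=R(X,Y)$, and the antisymmetry of $J$, I expect each of the four terms to equal the single quantity $Q\equiv\la R(JA,B)B,JB\ra$ pointwise. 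Since their sum integrates to zero, this forces $\int Q\,dx=0$, which is precisely the required vanishing. Combining the three contributions (the $\beta$-part zero, the first $\al$-summand zero, and the curvature term zero) yields $\frac{dE_2}{dt}=0$, that is $E_2(u)=E_2(u_0)$. The delicate bookkeeping in matching all four curvature terms to $Q$ is the step that needs the most care.
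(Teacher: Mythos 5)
Your proposal is correct and follows essentially the same route as the paper: differentiate $E_2$, commute derivatives to get $\frac{dE_2}{dt}=2\int\la u_t,J\nabla_x^2u_x\ra dx-\int\la u_t,R(u_x,Ju_x)u_x\ra dx$, dispose of the $\beta$-part by citing \cite{SW}, kill the first $\al$-summand as an exact derivative of $|\nabla_xu_x|^2$, and identify the remaining curvature term (using $\nabla R=0$, the pair symmetry, and the K\"ahler identities) with $\frac{1}{4}\int\nabla_x\la u_x,R(u_x,Ju_x)Ju_x\ra dx=0$. The "delicate bookkeeping" you flag does check out: each of the four terms in that total derivative equals $\la\nabla_xu_x,R(u_x,Ju_x)Ju_x\ra$, which is exactly the identity the paper uses in its last displayed line.
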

\textit{Proof}. Differentiating $E_2$ with respect to $t$, we have
\begin{eqnarray*}
{dE_2\over dt}&=&\int \la \nabla_t\nabla_x u_x, Ju_x\ra dx+\int\la
\nabla_x u_x, J\nabla_t u_x\ra dx\\
&=&\int\la\nabla_x^2u_t ,Ju_x\ra dx+\int\la R(u_t,u_x)u_x, Ju_x\ra
dx+\int\la \nabla_x u_x, J\nabla_x u_t\ra dx\\
&=&2\int\la u_t,J\nabla_x^2u_x\ra dx-\int \la u_t,R(u_x,Ju_x)u_x\ra dx.
\end{eqnarray*}

 Substituting (\ref{eq:1.15}) yields,
\begin{eqnarray*}
{dE_2\over dt}
&=&\al\left(2\int\la J\nb_xu_x,J\nabla_x^2u_x\ra dx-\int \la J\nb_xu_x,R(u_x,Ju_x)u_x\ra dx\right)\\
&&{}+\beta\left(\int \la R(u_x,Ju_x)Ju_x, J\nabla_x^2 u_x\ra dx-\int\la \nabla_x^2u_x,
R(u_x,Ju_x)u_x\ra dx\right.\\
&&{}-\left.{1\over2}\int\la R(u_x,Ju_x)Ju_x,R(u_x,Ju_x)u_x\ra dx\right).
\end{eqnarray*}

We have known that KdV flow on any K\"ahler manifold preserves $E_2$ (see \cite{SW}).
Thus, if $N$ is a locally Hermitian symmetric space, the same computation yields
\begin{eqnarray*}
{dE_2\over dt}
&=&\al\left(2\int\la J\nb_xu_x,J\nabla_x^2u_x\ra dx-\int \la J\nb_xu_x,R(u_x,Ju_x)u_x\ra dx\right)\\
&=&-\al \int \la \nb_xu_x,R(u_x,Ju_x)Ju_x\ra dx\\
&=&-{\al\over 4}\int\nabla_x (\la u_x, R(u_x,Ju_x)Ju_x\ra) dx=0.
\end{eqnarray*}
This completes the proof. \hspace*{\fill}$\Box$\\

 Now we prove the third conservation law $E_3$ with some special conditions about $N$.

 To begin with, we compute ${dE_3\over dt}$ as before where
$$E_3(u)\equiv\int|\nabla_x u_x|^2dx-{1\over4}\int\langle
u_x,R(u_x,Ju_x)Ju_x\rangle dx.$$

 For the first term of $E_3$, we have
\begin{eqnarray*}
{d\over dt}\int| \nabla_x u_x|^2 dx
&=&2\int \la \nabla_x^3 u_x,u_t\ra dx+2\int\la u_t, R(\nabla_x
u_x,u_x)u_x\ra dx.
\end{eqnarray*}
Substituting $u_t$ we get
\begin{eqnarray}\label{eq:3.2}
{d\over dt}\int| \nabla_x u_x|^2 dx
&=&\al\left(2\int \la \nabla_x^3 u_x,J\nb_xu_x\ra dx+2\int\la J\nb_xu_x, R(\nabla_x
u_x,u_x)u_x\ra dx\right)\nn\\
&&{}+\beta\left(2\int \la
\nabla_x^3u_x,\nabla_x^2u_x\ra dx+2\int\la\nabla_x^2u_x,R(\nabla_x
u_x,u_x)u_x\ra dx\right.\nn\\
&&{}+\int\la\nabla_x^3u_x,R(u_x,Ju_x)Ju_x\ra dx\nn\\
&&{}+\left.\int\la R(\nabla_x u_x,u_x)u_x\ra, R(u_x,Ju_x)Ju_x\ra dx\right).
\end{eqnarray}

For the $\al$-part of (\ref{eq:3.2}), using integration by parts and the property of $J$ yields
\begin{eqnarray*}
&&{}\al\left(2\int \la \nabla_x^3 u_x,J\nb_xu_x\ra dx+2\int\la J\nb_xu_x, R(\nabla_x
u_x,u_x)u_x\ra dx\right)\\
&=&-\al\left(2\int \la \nabla_x^2 u_x,J\nb^2_xu_x\ra dx+2\int\la \nb_xu_x, R(\nabla_x
u_x,u_x)Ju_x\ra dx\right)\\
&=&-2\al\int\la \nb_xu_x, R(\nabla_x
u_x,u_x)Ju_x\ra dx.
\end{eqnarray*}

For the $\beta$-part of (\ref{eq:3.2}), we proceed as that in (\cite{SW}). Then we obtain that
\begin{eqnarray}\label{eq:3.8}
{d\over dt}\int| \nabla_x u_x|^2 dx&=&
-2\al\int\la \nb_xu_x, R(\nabla_x
u_x,u_x)Ju_x\ra dx\nn\\
&&{}+\beta\left(3\int\la\nabla_x u_x,R(\nabla_x
u_x,Ju_x)J\nabla_x u_x\ra dx\right.\nn\\
&&{}+\left.\int\la R(\nabla_x u_x,u_x)u_x\ra, R(u_x,Ju_x)Ju_x\ra dx\right).
\end{eqnarray}

Now we turn to the second term of $E_3$. Differentiating it with respect to $t$ and
using the symmetry of $R$ yields
\begin{eqnarray}\label{eq:3.9}
&&{}-{1\over4}{d\over dt}\int\la u_x,R(u_x,Ju_x)Ju_x\ra dx
=-\int\la\nabla_xu_t, R(u_x,Ju_x)Ju_x\ra dx\nn\\
&=&-\al\int\la J\nabla_x^2u_x, R(u_x,Ju_x)Ju_x\ra dx\nn\\
&&{}-\beta\left(\int\la\nabla_x^3u_x,R(u_x,Ju_x)Ju_x\ra dx+
{1\over2}\int\la\nabla_x (R(u_x,Ju_x)Ju_x),R(u_x,Ju_x)Ju_x\ra dx\right)\nn\\
&=&-\al\int\la \nabla_x^2u_x, R(u_x,Ju_x)u_x\ra dx
-\beta\int\la\nabla_x^3u_x,R(u_x,Ju_x)Ju_x\ra dx\nn\\
&=&2\al\int\la \nabla_xu_x, R(\nb_xu_x,u_x)Ju_x\ra dx
-3\beta\int\la\nabla_x
u_x,R(\nabla_x u_x,Ju_x)J\nabla_x u_x\ra dx.
\end{eqnarray}

Combining (\ref{eq:3.8}) and (\ref{eq:3.9}) we get that
\begin{eqnarray}\label{eq:3.10}
 {dE_3\over dt}&=&{d\over dt}\left(\int| \nabla_x u_x|^2 dx
 -{1\over4}\int\la u_x,R(u_x,Ju_x)Ju_x\ra dx\right)\nn\\
 &=&\beta\int\la R(\nabla_x u_x,u_x)u_x, R(u_x,Ju_x)Ju_x\ra dx.
\end{eqnarray}

\begin{lem}\label{lem:3.3*}
Let $N$ is a locally Hermitian symmetric space. Then, for a smooth
solution $u:S^1\times (0,T)\rightarrow N$ to Hirota geometric flow (\ref{eq:1.15}) on $N$, we have
$${dE_3\over dt}=\beta\int\la R(\nabla_x u_x,u_x)u_x, R(u_x,Ju_x)Ju_x\ra dx.$$
\end{lem}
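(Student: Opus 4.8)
The statement is precisely the identity (\ref{eq:3.10}) already assembled above, so the task is to organize that derivation cleanly. The plan is to split $E_3 = A - \frac14 B$ with $A = \int|\nb_x u_x|^2\,dx$ and $B = \int\la u_x, R(u_x, Ju_x)Ju_x\ra\,dx$, to differentiate each piece in $t$, and to substitute the flow (\ref{eq:1.15}). Throughout I would lean on the two standing facts coming from the hypotheses: the K\"ahler condition gives $\nb J = 0$, so that $J$ commutes with $\nb_x$ and with the curvature operator while $\la J\cdot, \cdot\ra$ is skew; the locally Hermitian symmetric assumption gives $\nb R = 0$, so that $R$ passes through every covariant derivative as a parallel tensor. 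The organizing principle is that the Schr\"odinger ($\al$) contributions of $\dot A$ and $-\frac14\dot B$ cancel each other, while the dispersive ($\beta$) contributions collapse to the single curvature-quadratic term in the statement.

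First I would treat $\dot A$. Differentiating, using $\nb_t u_x = \nb_x u_t$ and the commutator $\nb_t\nb_x X = \nb_x\nb_t X + R(u_t, u_x)X$, then integrating by parts on $S^1$ and applying the pair symmetry of $R$, I obtain $\dot A = 2\int\la u_t, \nb_x^3 u_x\ra\,dx + 2\int\la u_t, R(\nb_x u_x, u_x)u_x\ra\,dx$. Substituting $u_t = \al J\nb_x u_x + \beta(\nb_x^2 u_x + \frac12 R(u_x, Ju_x)Ju_x)$, the $\al$-part reduces after one integration by parts to $-2\al\int\la\nb_x u_x, R(\nb_x u_x, u_x)Ju_x\ra\,dx$, since the leading integral $\int\la J\nb_x^2 u_x, \nb_x^2 u_x\ra$ vanishes by skew-symmetry of $J$. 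The $\beta$-part is exactly the pure KdV computation of \cite{SW}: after repeated integration by parts, discarding all $\nb R$-terms and invoking the first Bianchi identity together with the K\"ahler relation $R(X, Y)JZ = JR(X, Y)Z$, it consolidates into $3\beta\int\la\nb_x u_x, R(\nb_x u_x, Ju_x)J\nb_x u_x\ra\,dx + \beta\int\la R(\nb_x u_x, u_x)u_x, R(u_x, Ju_x)Ju_x\ra\,dx$.

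Next I would compute $-\frac14\dot B$. Because $\nb R = 0$ and $\nb J = 0$, the time derivative only lands on the four $u_x$-factors, and the symmetries of $R$ make these four contributions equal, producing the clean expression $-\frac14\dot B = -\int\la\nb_x u_t, R(u_x, Ju_x)Ju_x\ra\,dx$. Substituting the flow, the term quadratic in $R(u_x, Ju_x)Ju_x$ is a total $x$-derivative and drops; using $R(u_x, Ju_x)J^2 u_x = -R(u_x, Ju_x)u_x$ and one integration by parts, the $\al$-part becomes $+2\al\int\la\nb_x u_x, R(\nb_x u_x, u_x)Ju_x\ra\,dx$ and the $\beta$-part becomes $-3\beta\int\la\nb_x u_x, R(\nb_x u_x, Ju_x)J\nb_x u_x\ra\,dx$. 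Adding $\dot A$ and $-\frac14\dot B$, the two $\al$-integrals are exact negatives and cancel, the two $3\beta$-integrals cancel, and the sole survivor is $\beta\int\la R(\nb_x u_x, u_x)u_x, R(u_x, Ju_x)Ju_x\ra\,dx$, which is the claim.

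I expect the main obstacle to be the $\beta$-part of $\dot A$: turning the raw integrals $\int\la\nb_x^3 u_x, \nb_x^2 u_x\ra$, $\int\la\nb_x^2 u_x, R(\nb_x u_x, u_x)u_x\ra$ and $\int\la\nb_x^3 u_x, R(u_x, Ju_x)Ju_x\ra$ into the consolidated form above. This demands several carefully ordered integrations by parts in which one repeatedly uses $\nb R = 0$ to kill derivative-of-curvature terms and then the Bianchi and K\"ahler symmetries to rewrite the surviving cubic curvature expressions in $u_x$; the bookkeeping coincides verbatim with that for the pure KdV flow, so I would cite \cite{SW} rather than reproduce it, observing only that the decoupled $\al$-terms contribute nothing to the final identity.
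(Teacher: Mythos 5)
Your proposal is correct and follows essentially the same route as the paper: the same splitting $E_3=\int|\nabla_xu_x|^2dx-\frac14\int\langle u_x,R(u_x,Ju_x)Ju_x\rangle dx$, the same reduction of each time derivative to integrals against $u_t$, the same appeal to \cite{SW} for the $\beta$-part of the first piece, and the same cancellation of the $\alpha$-terms and of the $3\beta\int\langle\nabla_xu_x,R(\nabla_xu_x,Ju_x)J\nabla_xu_x\rangle dx$ terms, leaving only the curvature-quadratic integral. This matches the paper's derivation in (3.2)--(3.10) step for step.
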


Thus we obtain that $E_3(u)$ would be conserved under some additional conditions about $N$.
More precisely, the following lemma helps us get the
conservation law.

\begin{lem}\label{lem:3.4*}(\cite{SW})
Assume $(N, J, h)$ is one of the following three kinds of manifolds:
K\"{a}hler manifolds with constant holomorphic sectional curvature
$K$, complex Grassmannians $G_{p,q}(\mathbb{C})$ and a complex
hyperbolic Grassmannians $D_{m,l}(\mathbb{C})$. Then there always
holds true
 $$\la R(Y, X)X, R(X, JX)JX\ra\equiv 0,$$
for any tangent vector fields $X$ and $Y$ on $N$.
\end{lem}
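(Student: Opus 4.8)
The plan is to treat the three families separately, since the identity holds for two rather different reasons: a proportionality phenomenon in the constant holomorphic sectional curvature case, and a trace-cancellation phenomenon for the two Grassmannian families. In every case the strategy is the same in spirit: write down the curvature tensor explicitly, compute $R(X,JX)JX$ and $R(Y,X)X$ in closed form, and pair them off.

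First, for a K\"ahler manifold of constant holomorphic sectional curvature $K$, I would use the space-form expression
$$R(X,Y)Z = \frac{K}{4}\Big(\la Y,Z\ra X - \la X,Z\ra Y + \la JY,Z\ra JX - \la JX,Z\ra JY + 2\la X,JY\ra JZ\Big).$$
Substituting $Y=Z=JX$ and using $J^2=-\mathrm{id}$, $\la X,JX\ra=0$, and that $J$ is an isometry, all the terms collapse to give $R(X,JX)JX = K|X|^2 X$; that is, $R(X,JX)JX$ is parallel to $X$. Consequently
$$\la R(Y,X)X, R(X,JX)JX\ra = K|X|^2 \la R(Y,X)X, X\ra,$$
and the right-hand side vanishes identically because the curvature tensor is skew-symmetric in its last two arguments: $\la R(Y,X)X,X\ra = -\la R(Y,X)X,X\ra = 0$. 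This settles the constant-curvature case (which contains $\mathbb{C}^k$, $\mathbb{C}P^m$, $\mathbb{C}H^n$) with no case-dependent algebra.

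For the complex Grassmannian $G_{p,q}(\mathbb{C})=U(p+q)/(U(p)\times U(q))$ and its noncompact dual $D_{m,l}(\mathbb{C})$, both Hermitian symmetric spaces, I would identify the tangent space at a base point with the space $M_{p,q}(\mathbb{C})$ of rectangular complex matrices, so that $J$ becomes multiplication by $i$ and the metric is $\la A,B\ra = \mathrm{Re}\,\mathrm{tr}(AB^*)$ up to normalization. Computing the brackets in $\mathfrak{u}(p+q)$, the symmetric-space formula $R(A,B)C = \mp[[A,B],C]$ yields
$$R(A,B)C = \kappa\big(AB^*C - BA^*C - CA^*B + CB^*A\big),$$
where $\kappa$ is a fixed nonzero constant whose sign distinguishes the compact from the noncompact case. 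Putting $JX=iX$ I expect
$$R(X,JX)JX = 4\kappa\, XX^*X, \qquad R(Y,X)X = \kappa\big(YX^*X - 2XY^*X + XX^*Y\big).$$
Then $\la R(Y,X)X, R(X,JX)JX\ra$ is, up to the factor $4\kappa^2$, the real part of a sum of three traces; writing $M=X^*X$ and using cyclicity of the trace repeatedly, the first and third traces both reduce to $\mathrm{tr}(X^*YM^2)$ while the middle one reduces to $-2\,\mathrm{tr}(Y^*XM^2)$. Since $M$ is Hermitian, $\overline{\mathrm{tr}(Y^*XM^2)}=\mathrm{tr}(X^*YM^2)$, so the two real parts coincide and the total cancels to $0$. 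Because only the sign of $\kappa$ changes for $D_{m,l}(\mathbb{C})$, the identical computation covers the noncompact dual.

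The bracket computation and the trace bookkeeping are routine; the main obstacle I anticipate is pinning down the matrix model and the normalization of $R$ exactly right, in particular the sign conventions for the curvature and the realization of $J$, since an error there would be invisible in the final cancellation yet would corrupt the intermediate formulas. Once the explicit form of $R$ is fixed, the vanishing is forced purely by the symmetries of the trace, so no analytic estimates are needed.
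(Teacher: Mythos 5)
The paper does not actually prove this lemma here --- it is quoted from \cite{SW} with no argument given --- so there is nothing in-paper to compare against; judged on its own, your proposal is correct and is exactly the computation one expects that reference to contain. Both halves check out: in the constant holomorphic sectional curvature case $R(X,JX)JX=K|X|^2X$, so the pairing vanishes by the skew-symmetry $\la R(Y,X)X,X\ra=0$; and in the matrix model of $G_{p,q}(\mathbb{C})$ and its noncompact dual, with $M=X^*X$ Hermitian, the three traces reduce to $2\,\mathrm{Re}\,\mathrm{tr}(X^*YM^2)-2\,\mathrm{Re}\,\mathrm{tr}(Y^*XM^2)=0$, exactly as you say, and the sign ambiguity in $\kappa$ is indeed immaterial.
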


\begin{coro}\label{lm:3.3}
If $N=M_1\times M_2\times \cdots \times M_n$ is a product manifold
where $(M_i, h^i)$ ($i=1,2,\cdots,n$) is a locally Hermitian
symmetric space satisfying $h^i(R^i(Y, X)X, R^i(X, JX)JX)\equiv 0$
where $R^i$ is the Riemann curvature on $M_i$, then for a smooth
solution $u:S^1\times (0,T)\rightarrow N$ to
Hirota geometric flow (\ref{eq:1.15}) on $N$,  $E_3(u)$ is
preserved, i.e.,
 $${dE_3\over dt}={d\over dt}\left(\int| \nabla_x u_x|^2 dx
 -{1\over4}\int\la u_x,R(u_x,Ju_x)Ju_x\ra dx\right)=0.$$
\end{coro}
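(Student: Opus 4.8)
The plan is to reduce everything to Lemma \ref{lem:3.3*} together with the block structure of the curvature tensor on a Riemannian product. First I would verify that the hypotheses of Lemma \ref{lem:3.3*} are met by $N$ itself, i.e. that a product of locally Hermitian symmetric spaces is again locally Hermitian symmetric. This is immediate from the product structure: the metric $h=\bigoplus_i h^i$ and the complex structure $J=\bigoplus_i J_i$ make $N$ a K\"ahler manifold, the Levi-Civita connection splits as $\nabla=\bigoplus_i\nabla^i$, and the curvature splits as $R=\bigoplus_i R^i$. Since $\nabla^iR^i=0$ on each factor, we get $\nabla R=0$ on $N$, so $N$ is locally Hermitian symmetric and Lemma \ref{lem:3.3*} applies, giving
$$\frac{dE_3}{dt}=\beta\int\langle R(\nabla_x u_x,u_x)u_x,\,R(u_x,Ju_x)Ju_x\rangle\,dx.$$
It therefore suffices to prove that the integrand vanishes pointwise.

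The key step is the curvature identity $\langle R(Y,X)X,\,R(X,JX)JX\rangle\equiv0$ on $N$, which I would obtain from the vanishing of all cross-factor curvature terms. Decomposing any tangent vector as $X=\sum_i X_i$ with $X_i\in TM_i$ and using that $R(Y_a,X_b)X_c=0$ unless $a=b=c$ (a standard feature of Riemannian products), one finds
$$R(Y,X)X=\sum_i R^i(Y_i,X_i)X_i,\qquad R(X,JX)JX=\sum_i R^i(X_i,J_iX_i)J_iX_i,$$
where I have used that $J=\bigoplus_i J_i$ preserves each factor. Both outputs then respect the orthogonal decomposition $TN=\bigoplus_i TM_i$, so taking the inner product against the direct-sum metric (whose cross terms between distinct factors drop out) yields
$$\langle R(Y,X)X,\,R(X,JX)JX\rangle=\sum_i h^i\big(R^i(Y_i,X_i)X_i,\,R^i(X_i,J_iX_i)J_iX_i\big),$$
and each summand is zero by the hypothesis imposed on the factors $M_i$.

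Finally I would specialize $Y=\nabla_x u_x$ and $X=u_x$ at each point $u(x,t)\in N$, so that the integrand above is exactly $\langle R(\nabla_x u_x,u_x)u_x,\,R(u_x,Ju_x)Ju_x\rangle$, which vanishes everywhere by the identity just established. Hence $\frac{dE_3}{dt}=0$, as claimed. The only step requiring genuine care is the claim that the product curvature carries no cross-factor terms; I expect this to be the main (though routine) obstacle, and it is precisely what makes the factorwise hypothesis $h^i(R^i(Y,X)X,R^i(X,JX)JX)\equiv0$ the exact condition needed for the full integrand to collapse.
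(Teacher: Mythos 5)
Your proposal is correct and is precisely the ``easy and standard'' argument the paper omits: apply Lemma \ref{lem:3.3*} (after noting the product is again locally Hermitian symmetric since $\nabla R=\bigoplus_i\nabla^iR^i=0$) and then kill the integrand via the block-diagonal structure of the product curvature together with the factorwise hypothesis. No gaps; this matches the paper's intended route.
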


 The proof is easy and standard. We omit the detail. \\

Next we prove the semi-conservation law about $E_4$, where
\begin{eqnarray*}
E_4(u)&=&2\int|\nabla_x^2u_x|^2dx-3\int\langle\nabla_x u_x,R(\nabla_x u_x,u_x)u_x\rangle dx\\
&&{}-5\int\langle\nabla_x u_x,R(\nabla_x u_x ,Ju_x)Ju_x\rangle dx.
\end{eqnarray*}
\begin{lem}\label{lm:3.4}
With the same assumptions as in Lemma \ref{lm:3.1}, we have
$${dE_4\over dt}\leqslant C(E_4+1),$$ where $C$ is a constant depends on
$N$, $E_1(u_0)$ and $||\nabla_x u_x||_{L^2}$.
\end{lem}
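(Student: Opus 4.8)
The plan is to differentiate $E_4$ term by term in $t$, substitute the flow \eqref{eq:1.15}, and organize the resulting expression by differential order, using the Hermitian symmetric hypothesis $\nabla R=0$ throughout. Write $E_4=A+B+D$ with $A=2\int|\nabla_x^2u_x|^2\,dx$, $B=-3\int\la\nabla_xu_x,R(\nabla_xu_x,u_x)u_x\ra\,dx$ and $D=-5\int\la\nabla_xu_x,R(\nabla_xu_x,Ju_x)Ju_x\ra\,dx$. I would first treat $A$. Using $\nabla_tu_x=\nabla_xu_t$ together with the commutation rule $\nabla_t\nabla_x=\nabla_x\nabla_t+R(u_t,u_x)$ applied twice (exactly as in the derivation of \eqref{eq:4.11}), one gets $\tfrac{dA}{dt}=4\int\la\nabla_t\nabla_x^2u_x,\nabla_x^2u_x\ra dx$, and after commuting $\nabla_t$ past the two spatial derivatives this becomes $4\int\la\nabla_x^3u_t,\nabla_x^2u_x\ra dx$ plus curvature commutator terms. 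Substituting $u_t=\alpha J\nabla_xu_x+\beta(\nabla_x^2u_x+\tfrac12R(u_x,Ju_x)Ju_x)$ produces, up to positive constants, the two genuinely highest-order contributions $\alpha\int\la J\nabla_x^4u_x,\nabla_x^2u_x\ra dx$ and $\beta\int\la\nabla_x^5u_x,\nabla_x^2u_x\ra dx$.

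These two top terms vanish identically: integrating the first by parts once and using the Kähler identity $\nabla J=0$ gives $-\int\la J\nabla_x^3u_x,\nabla_x^3u_x\ra dx=0$ by antisymmetry of $J$, while the second integrates on $S^1$ to $-\int\la\nabla_x^4u_x,\nabla_x^3u_x\ra dx=-\tfrac12\int\tfrac{d}{dx}|\nabla_x^3u_x|^2dx=0$. This is exactly the cancellation that singles out $E_4$ as an admissible higher-order quantity. The serious work — and the step I expect to be the main obstacle — is the \emph{next} layer: the curvature terms created when $\nabla_t$ is commuted past $\nabla_x^2$ in $A$, together with the top-order terms produced by $\tfrac{dB}{dt}$ and $\tfrac{dD}{dt}$, all of schematic type $\int\la\nabla_x^3u_x,R(\cdots)\ra$ and $\int\la\nabla_x^2u_x,R(\nabla_x^2u_x,\cdot)\cdot\ra$. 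The constants $2$, $3$, $5$ in $E_4$ are chosen precisely so that, after repeatedly integrating by parts and invoking $\nabla R=0$, the first Bianchi identity, the pair and skew symmetries of $R$, and $\nabla J=0$, these leading curvature integrals cancel against one another. I would carry this out by collecting every term containing either two factors of $\nabla_x^2u_x$ or a single $\nabla_x^3u_x$ and verifying the algebraic cancellation; since the computation for the $\beta$ (KdV) part is identical to \cite{SW}, I would quote those cancellations and detail only the new pure-$\alpha$ and mixed $\alpha$-$\beta$ interactions.

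Once the leading terms are gone, every surviving integrand is a contraction of $R$ (and, absent any $\nabla R$, no derivative of $R$) with covariant derivatives of $u_x$ of total order at most four, carrying at most two factors of $\nabla_x^2u_x$ and all other factors of order $\leqslant 1$. On the locally Hermitian symmetric target the image $u(S^1\times[0,T])$ lies in a compact set, so $|R|$ is bounded, and each such term is estimated by H\"older's inequality followed by the one-dimensional Gagliardo-Nirenberg interpolation inequalities already used in \eqref{eq:4.14}, namely $\|u_x\|_{L^\infty}\leqslant C(\|\nabla_xu_x\|_{L^2}^2+\|u_x\|_{L^2}^2)^{1/4}\|u_x\|_{L^2}^{1/2}$ and its analogue for $\nabla_xu_x$. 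Since $E_1(u)=E_1(u_0)$ controls $\|u_x\|_{L^2}$ and $\|\nabla_xu_x\|_{L^2}$ is by hypothesis absorbed into the constant $C$, every remaining integral is bounded by $C(1+\|\nabla_x^2u_x\|_{L^2}^2)$. Finally I would observe that $B$ and $D$ are themselves lower order, satisfying $|B|+|D|\leqslant C(1+\|\nabla_xu_x\|_{L^2}^2)$ by the same interpolation, whence $\|\nabla_x^2u_x\|_{L^2}^2\leqslant C(E_4+1)$ and conversely. Combining these bounds yields $\tfrac{dE_4}{dt}\leqslant C(E_4+1)$ with $C$ depending only on $N$, $E_1(u_0)$ and $\|\nabla_xu_x\|_{L^2}$, as claimed.
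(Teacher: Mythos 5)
Your proposal follows essentially the same route as the paper: the same decomposition of $E_4$ into the three pieces with coefficients $2,-3,-5$, the same observation that the top-order terms $\int\la J\nabla_x^3u_x,\nabla_x^3u_x\ra dx$ and $\int\la\nabla_x^4u_x,\nabla_x^3u_x\ra dx$ vanish, the same reliance on those coefficients to cancel the leading curvature integrals (in the paper these appear with factors $3A_1+2A_2$, $5A_1+2A_3$ and $A_1-A_2+A_3$, all zero), the same quoting of \cite{SW} for the KdV part, and the same closing estimate of the remainder by H\"older and Gagliardo--Nirenberg interpolation in terms of $E_1(u_0)$ and $\|\nabla_xu_x\|_{L^2}$, followed by the comparison $\|\nabla_x^2u_x\|_{L^2}^2\leqslant C(1+E_4)$. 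The argument is correct in outline and matches the paper's proof.
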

\textit{Proof}. For simplicity, we still denote
\begin{eqnarray*}
E_4(u)\triangleq A_1F_1+A_2F_2+A_3F_3,
\end{eqnarray*}
as before, where $A_1=2$, $A_2=-3$, $A_3=-5$,
$$F_1\triangleq\int|\nabla_x^2u_x|^2dx,$$
$$F_2\triangleq\int\langle \nabla_x u_x,R(\nabla_x u_x,u_x)u_x\rangle dx,$$
and
$$F_3\triangleq\int\langle\nabla_x u_x,R(\nabla_x u_x ,Ju_x)Ju_x\rangle dx$$
respectively.

We first consider $F_1$.  Differentiating it with respect to $t$, we have
\begin{eqnarray}\label{eq:3.13}
{dF_1\over dt}&=&{d\over dt}\int|\nabla_x^2u_x|^2dx\nn\\
&=&-2\int\la u_t,\nabla_x^5u_x\ra dx-2\int\la u_t,R(\nabla_x^3u_x,u_x)u_x\ra dx\nn\\
&&{}+2\int\la u_t,R(\nabla_x^2u_x,\nabla_x u_x)u_x\ra dx.
\end{eqnarray}

After substituting (\ref{eq:1.15}) into above, we get two parts in the equality, i.e. the
Schr\"odinger part and the KdV part. Here we mainly deal with the first part. For the KdV part,
we use the results in \cite{SW}. Then we have

\begin{eqnarray}\label{F1}
&&{}{dF_1\over dt}={d\over dt}\int|\nabla_x^2u_x|^2dx=\alpha I_{11}+\beta I_{12},
\end{eqnarray}
where
\begin{eqnarray}
I_{11}&=&-2\int\la J\nb_xu_x,\nabla_x^5u_x\ra dx-2\int\la J\nb_xu_x,R(\nabla_x^3u_x,u_x)u_x\ra dx\nn\\
&&{}+2\int\la J\nb_xu_x,R(\nabla_x^2u_x,\nabla_x u_x)u_x\ra dx;\nn
\end{eqnarray}
 and $I_{12}$ is the KdV part:
\begin{eqnarray}
I_{12}&=&15\int\la \nabla_x^2u_x,R(\nabla_x^2 u_x,Ju_x)J\nabla_x u_x\ra dx+9\int\la \nabla_x^2u_x,R(\nabla_x^2 u_x,\nabla_x u_x)u_x\ra dx\nn\\
&&{}+2\int\la R(u_x,Ju_x)Ju_x,R(\nabla_x^2 u_x,\nabla_x u_x)u_x\ra dx\nn\\
&&{}+2\int\la R(\nabla_x u_x,Ju_x)Ju_x,R(\nabla_x^2 u_x,u_x)u_x\ra dx\nn\\
&&{}+\int\la R(u_x,Ju_x)Ju_x,R(\nabla_x^2 u_x,u_x)\nabla_x u_x\ra dx\nn\\
&&{}+\int\la R(u_x,Ju_x)J\nabla_x u_x,R(\nabla_x^2 u_x,u_x)u_x\ra dx.
\end{eqnarray}
Obviously, the first term of $I_{11}$ vanishes since
$$\int\la J\nb_xu_x,\nabla_x^5u_x\ra dx=\int\la J\nb_x^3u_x,\nabla_x^3u_x\ra dx=0.$$
For the second term of $I_{11}$, we have
\begin{eqnarray}
&&{}-2\int\la J\nb_xu_x,R(\nabla_x^3u_x,u_x)u_x\ra dx
=2\int\la \nabla_x^3u_x,R(\nb_xu_x,Ju_x)u_x\ra dx\nn\\
&=&-2\int\la \nabla_x^2u_x,R(\nb^2_xu_x,Ju_x)u_x\ra dx
-2\int\la \nabla_x^2u_x,R(\nb_xu_x,J\nb_xu_x)u_x\ra dx\nn\\
&&{}-2\int\la \nabla_x^2u_x,R(\nb_xu_x,Ju_x)\nb_xu_x\ra dx.\nn
\end{eqnarray}
For the third term of $I_{11}$, it is easy to check that
$$2\int\la J\nb_xu_x,R(\nabla_x^2u_x,\nabla_x u_x)u_x\ra dx
=-2\int\la \nb_x^2u_x,R(\nabla_xu_x, Ju_x)\nabla_xu_x\ra dx$$
Hence
\begin{eqnarray}\label{I11}
I_{11}&=&-2\int\la \nabla_x^2u_x,R(\nb^2_xu_x,Ju_x)u_x\ra dx
-2\int\la \nabla_x^2u_x,R(\nb_xu_x,J\nb_xu_x)u_x\ra dx\nn\\
&&{}-4\int\la \nabla_x^2u_x,R(\nb_xu_x,Ju_x)\nb_xu_x\ra dx.
\end{eqnarray}

Next we compute $dF_2\over dt$. From \cite{SW} we have
\begin{eqnarray}\label{eq:3.22}
&&{}{dF_2\over dt}={d\over dt}\int\langle \nabla_x u_x,R(\nabla_x u_x,u_x)u_x\rangle dx\nn\\
&=&2\int\langle u_t,R(\nabla_x^3 u_x,u_x)u_x\rangle dx-2\int\langle u_t,R(\nabla_x^2 u_x,\nabla_x u_x)u_x\rangle dx\nn\\
&&{}+10\int\langle u_t,R(\nabla_x^2 u_x,u_x)\nabla_x u_x\rangle dx
+2\int\langle u_t,R(R(\nabla_x u_x,u_x)u_x,u_x)u_x\rangle dx.\nn
\end{eqnarray}

Thus, substituting (\ref{eq:1.15}) yields
\begin{eqnarray}\label{F2}
&&{dF_2\over dt}={d\over dt}\int\langle \nabla_x u_x,R(\nabla_x u_x,u_x)u_x\rangle dx
=\alpha I_{21}+\beta I_{22},
\end{eqnarray}
where
\begin{eqnarray}
I_{21}&=&2\int\langle J\nb_xu_x,R(\nabla_x^3 u_x,u_x)u_x\rangle dx
-2\int\langle J\nb_xu_x,R(\nabla_x^2 u_x,\nabla_x u_x)u_x\rangle dx\nn\\
&&{}+10\int\langle J\nb_xu_x,R(\nabla_x^2 u_x,u_x)\nabla_x u_x\rangle dx\nn\\
&&{}+2\int\langle J\nb_xu_x,R(R(\nabla_x u_x,u_x)u_x,u_x)u_x\rangle dx;\nn
\end{eqnarray}
and $I_{22}$ is the KdV part
\begin{eqnarray}
I_{22}&=&6\int\la\nabla_x^2u_x,R(\nabla_x^2 u_x,u_x)\nabla_x u_x\ra dx\nn\\
&&{}-2\int\la R(u_x,Ju_x)Ju_x,R(\nabla_x^2 u_x,\nabla_x u_x)u_x\ra dx\nn\\
&&{}+4\int\la R(u_x,Ju_x)Ju_x,R(\nabla_x^2 u_x,u_x)\nabla_x u_x\ra dx\nn\\
&&{}-\int\la R(u_x,Ju_x)J\nabla_x u_x,R(\nabla_x^2 u_x,u_x)u_x\ra dx\nn\\
&&{}-2\int\la R(\nabla_x u_x,Ju_x)Ju_x,R(\nabla_x^2 u_x,u_x)u_x\ra dx\nn\\
&&{}+2\int\langle R(\nabla_x u_x,u_x)u_x,R(\nabla_x^2u_x,u_x)u_x\rangle dx\nn\\
&&{}+\int\langle R(u_x,Ju_x)Ju_x,R(R(\nabla_x u_x,u_x)u_x,u_x)u_x\rangle dx.
\end{eqnarray}
For the first term of $I_{21}$, integrating by parts yields
\begin{eqnarray}
&&{}2\int\langle J\nb_xu_x,R(\nabla_x^3 u_x,u_x)u_x\rangle dx
=-2\int\langle \nb_x^3u_x,R(\nabla_x u_x,Ju_x)u_x\rangle dx\nn\\
&=&2\int\langle \nb_x^2u_x,R(\nabla^2_x u_x,Ju_x)u_x\rangle dx
+2\int\langle \nb_x^2u_x,R(\nabla_x u_x,J\nb_xu_x)u_x\rangle dx\nn\\
&&{}+2\int\langle \nb_x^2u_x,R(\nabla_x u_x,Ju_x)\nb_xu_x\rangle dx.\nn
\end{eqnarray}
Moreover,
$$10\int\langle J\nb_xu_x,R(\nabla_x^2 u_x,u_x)\nabla_x u_x\rangle dx
=-10\int\langle \nb^2_xu_x,R(\nabla_x u_x,J\nabla_x u_x)u_x\rangle dx.$$
Hence we have
\begin{eqnarray}\label{I21}
I_{21}&=&2\int\langle \nb_x^2u_x,R(\nabla^2_x u_x,Ju_x)u_x\rangle dx
-8\int\langle \nb_x^2u_x,R(\nabla_x u_x,J\nb_xu_x)u_x\rangle dx\nn\\
&&{}+4\int\langle \nb_x^2u_x,R(\nabla_x u_x,Ju_x)\nb_xu_x\rangle dx\nn\\
&&{}-2\int\langle \nb_xu_x,R(R(\nabla_x u_x,u_x)u_x,u_x)Ju_x\rangle dx.
\end{eqnarray}

Similarly, we deduce:
\begin{eqnarray}\label{F3}
{dF_3\over dt}&=&{d\over dt}\int\langle \nabla_x u_x,R(\nabla_x u_x,Ju_x)Ju_x\rangle dx\nn\\
&=&2\int\la u_t,R(\nabla_x^3u_x,Ju_x)Ju_x\ra dx
+6\int\la u_t,R(\nabla_x^2u_x,J\nabla_x u_x)Ju_x\ra dx\nn\\
&&{}+2\int\la u_t,R(\nabla_x^2u_x,Ju_x)J\nabla_x u_x\ra dx
+2\int\la u_t,R(\nabla_x u_x,J\nabla_x u_x)J\nabla_x u_x\ra dx\nn\\
&&{}+2\int\la u_t,R\big(R(\nabla_x u_x,Ju_x)Ju_x,u_x\big)u_x\ra dx\nn\\
&=&\alpha I_{31}+\beta I_{32}.
\end{eqnarray}

Here $I_{32}$ is the KdV part:
\begin{eqnarray}\label{eq:3.31}
I_{32}&=&6\int\la\nabla_x^2u_x,R(\nabla_x^2u_x,J\nabla_x u_x)Ju_x\ra dx\nn\\
&&{}-\int\la R(u_x,Ju_x)J\nabla_x u_x,R(\nabla_x^2u_x,Ju_x)Ju_x\ra dx\nn\\
&&{}-2\int\la R(\nabla_x u_x,Ju_x)Ju_x,R(\nabla_x^2u_x,Ju_x)Ju_x\ra dx\nn\\
&&{}+2\int\la R(u_x,Ju_x)Ju_x,R(\nabla_x^2u_x,J\nabla_x u_x)Ju_x\ra dx\nn\\
&&{}+\int\la R(u_x,Ju_x)Ju_x,R(\nabla_x u_x,J\nabla_x u_x)J\nabla_x u_x\ra dx\nn\\
&&{}+2\int\la R(\nabla_x u_x,Ju_x)Ju_x,R\big(\nabla_x^2u_x,u_x\big)u_x\ra dx\nn\\
&&{}+\int\la R(u_x,Ju_x)Ju_x,R\big(R(\nabla_x u_x,Ju_x)Ju_x,u_x\big)u_x\ra dx.
\end{eqnarray}

$I_{31}$ is the Schr\"odinger part:
\begin{eqnarray}
I_{31}&=&2\int\la J\nb_xu_x,R(\nabla_x^3u_x,Ju_x)Ju_x\ra dx
+6\int\la J\nb_xu_x,R(\nabla_x^2u_x,J\nabla_x u_x)Ju_x\ra dx\nn\\
&&{}+2\int\la J\nb_xu_x,R\big(R(\nabla_x u_x,Ju_x)Ju_x,u_x\big)u_x\ra dx.\nn
\end{eqnarray}
Note that the first term of $I_{31}$:
\begin{eqnarray}\label{eq:3.27}
&&{}2\int\la J\nb_xu_x,R(\nabla_x^3u_x,Ju_x)Ju_x\ra dx
=2\int\la \nb_x^3u_x,R(\nabla_xu_x,u_x)Ju_x\ra dx\nn\\
&=&-2\int\la \nb_x^2u_x,R(\nabla^2_xu_x,u_x)Ju_x\ra dx
-2\int\la \nb_x^2u_x,R(\nabla_xu_x,u_x)J\nb_xu_x\ra dx.\nn
\end{eqnarray}
For the second term of $I_{31}$ we have:
\begin{eqnarray}
6\int\la J\nb_xu_x,R(\nabla_x^2u_x,J\nabla_x u_x)Ju_x\ra dx
=6\int\la \nb_x^2xu_x,R(\nabla_xu_x,u_x)J\nabla_x u_x\ra dx.\nn
\end{eqnarray}
Hence we obtain
\begin{eqnarray}\label{I31}
I_{31}&=&-2\int\la \nb_x^2u_x,R(\nabla^2_xu_x,u_x)Ju_x\ra dx
+4\int\la \nb_x^2u_x,R(\nabla_xu_x,u_x)J\nb_xu_x\ra dx\nn\\
&&{}-2\int\la \nb_xu_x,R\big(R(\nabla_x u_x,Ju_x)Ju_x,u_x\big)Ju_x\ra dx.
\end{eqnarray}

In view of (\ref{F1})$-$(\ref{I31}) we have
\begin{eqnarray}\label{eq:3.32}
{dE_4\over dt}&=& \sum_{i=1}^3A_i{dF_i\over dt}=\alpha \sum_{i=1}^3I_{1i}+\beta \sum_{j=1}^3I_{2j}\nn\\
&=&
3(3A_1+2A_2)\beta\int\la \nabla_x^2u_x,R(\nabla_x^2 u_x,u_x)\nabla_x u_x\ra dx\nn\\
&&{}+3(5A_1+2A_3)\beta\int\la \nabla_x^2u_x,R(\nabla_x^2 u_x,Ju_x)J\nabla_x u_x\ra dx\nn\\
&&{}+2(A_1-A_2+A_3)\beta\int\la R(\nabla_x u_x,Ju_x)Ju_x,R(\nabla_x^2 u_x,u_x)u_x\ra dx\nn\\
&&{}-2(A_1-A_2+A_3)\al\int\la \nabla_x^2u_x,R(\nb^2_xu_x,Ju_x)u_x\ra dx\nn\\
&&{}-2(A_1+4A_2+A_3)\al\int\la \nabla_x^2u_x,R(\nb_xu_x,J\nb_xu_x)u_x\ra dx\nn\\
&&{}-4(A_1-A_2)\al\int\la \nabla_x^2u_x,R(\nb_xu_x,Ju_x)\nb_xu_x\ra dx\nn\\
&&{}-2A_2\al\int\langle \nb_xu_x,R(R(\nabla_x u_x,u_x)u_x,u_x)Ju_x\rangle dx\nn\\
&&{}-2A_3\al\int\la \nb_xu_x,R\big(R(\nabla_x u_x,Ju_x)Ju_x,u_x\big)Ju_x\ra dx\nn\\
&&{}+2(A_1-A_2)\beta\int\la R(u_x,Ju_x)Ju_x,R(\nabla_x^2 u_x,\nabla_x u_x)u_x\ra dx\nn\\
&&{}+(A_1+4A_2)\beta\int\la R(u_x,Ju_x)Ju_x,R(\nabla_x^2 u_x,u_x)\nabla_x u_x\ra dx\nn\\
&&{}+(A_1-A_2)\beta\int\la R(u_x,Ju_x)J\nabla_x u_x,R(\nabla_x^2 u_x,u_x)u_x\ra dx\nn\\
&&{}+2A_2\beta\int\langle R(\nabla_x u_x,u_x)u_x,R(\nabla_x^2u_x,u_x)u_x\rangle dx\nn\\
&&{}+A_2\beta\int\langle R(u_x,Ju_x)Ju_x,R(R(\nabla_x u_x,u_x)u_x,u_x)u_x\rangle dx\nn\\
&&{}-A_3\beta\int\la R(u_x,Ju_x)J\nabla_x u_x,R(\nabla_x^2u_x,Ju_x)Ju_x\ra dx\nn\\
&&{}-2A_3\beta\int\la R(\nabla_x u_x,Ju_x)Ju_x,R(\nabla_x^2u_x,Ju_x)Ju_x\ra dx\nn\\
&&{}+2A_3\beta\int\la R(u_x,Ju_x)Ju_x,R(\nabla_x^2u_x,J\nabla_x u_x)Ju_x\ra dx\nn\\
&&{}+A_3\beta\int\la R(u_x,Ju_x)Ju_x,R(\nabla_x u_x,J\nabla_x u_x)J\nabla_x u_x\ra dx\nn\\
&&{}+A_3\beta\int\la R(u_x,Ju_x)Ju_x,R\big(R(\nabla_x u_x,Ju_x)Ju_x,u_x\big)u_x\ra dx.
\end{eqnarray}

Since $A_1=2$, $A_2=-3$ and $A_3=-5$, the first four terms with higher order
derivatives vanish. Let's denote the remaining terms of (\ref{eq:3.32}) by
$G$. It is easy to see that
\begin{eqnarray*}
|G|&\leqslant& C(N)|\al|\int \big(|\nb_x^2u_x||\nb_xu_x|^2|u_x|+|\nb_xu_x|^2|u_x|^4\big)dx \\
&&{}+C(N)|\beta| \int\big(|\nabla_x^2u_x||\nabla_x u_x||u_x|^4+|\nabla_x u_x|^3|u_x|^3+|\nabla_x u_x||u_x|^7\big)dx\\
&\leqslant&  C(N,\al,\beta)\left( ||u_x||_{L^\infty}(\int|\nabla_x^2u_x|^2dx)^{1\over2}
(\int|\nabla_x u_x|^4dx)^{1\over2}+||u_x||^4_{L^\infty}||\nb_xu_x||^2_{L^2}\right.\\
&&{}+||u_x||^4_{L^\infty}(\int|\nabla_x^2u_x|^2dx)^{1\over2}(\int|\nabla_x u_x|^2dx)^{1\over2}\\
&&{}\left.+||u_x||^3_{L^\infty}\int|\nabla_x
u_x|^3dx+||u_x||^6_{L^\infty}(\int|\nabla_x
u_x|^2dx)^{1\over2}(\int|u_x|^2dx)^{1\over2}\right).
\end{eqnarray*}
By the interpolation inequality for sections on vector bundles (see
\cite{YD} for details):
\begin{eqnarray*}
||u_x||_{L^\infty}&\leqslant& C(N)(||\nabla_x u_x||^2_{L^2}+||u_x||^2_{L^2})^{1\over4}||u_x||^{1\over2}_{L^2}\nn\\
&\leqslant& C(N, ||\nabla_x u_x||_{L^2}, E_1(u_0));\label{eq:3.33}\\
||\nabla_x u_x||^3_{L^3}&\leqslant& C(N)(||\nabla_x ^2u_x||^2_{L^2}+||\nabla_x u_x||^2_{L^2})^{1\over4}||\nabla_x u_x||^{5\over2}_{L^2}\nn\\
&\leqslant&C(N, ||\nabla_x u_x||_{L^2})\big(1+||\nabla_x^2u_x||^2_{L^2}\big);\label{eq:3.34}\\
||\nb_xu_x||^4_{L^4}&\leqslant&C(N)(||\nabla_x ^2u_x||^2_{L^2}+||\nabla_x u_x||^2_{L^2})^{1\over2}||\nabla_x u_x||^{3}_{L^2},
\end{eqnarray*}
we have
\begin{eqnarray*}
|G|&\leqslant& C\big(1+||\nabla_x ^2u_x||^2_{L^2}\big),
\end{eqnarray*}
which implies
\begin{eqnarray*}
{dE_4\over dt}&\leqslant& C\big(1+\int|\nabla_x^2u_x|^2dx)\\
&\leqslant& C(1+E_4).
\end{eqnarray*}
where $C=C(N, ||\nabla_x u_x||_{L^2}, E_1(u_0),\al,\beta)$ only depends on $N$, $\al$, $\beta$,
$E_1(u_0)$ and $||\nabla_x u_x||_{L^2}$. This completes the
proof.\hspace*{\fill}$\Box$

\section{Global existence}
In this section we will prove Theorem \ref{thm:1.5}. Since $u_0\in
H^4(S^1, N)$, we can always choose a sequence of smooth maps $u_{0i}
\in C^\infty(S^1, N)$ such that, as $i\rightarrow\infty$,
$$\|u_{0i}-u_0\|_{H^4}\rightarrow 0.$$
From the previous arguments in Theorem 1.3, we know that the Cauchy
problem (\ref{eq:1.15}) with the initial map $u_{0i}$ admits a
unique smooth local solution $u^i$ such that
 $$u^i \in C([0, T(N, \|u_{0i}\|_{H^4})], H^k(S^1, N)))$$
for any $k \geqslant 4$. Obviously, we can see easily that $T(N,
\|u_{0i}\|_{H^4})$ have a uniform lower bound. Hence, letting
$i\rightarrow \infty$, we obtain the local solution to the Cauchy
problem of the Schr\"odinger-Airy flow with the initial map $u_0\in H^4(S^1, N)$.
So, to prove Theorem \ref{thm:1.5}, we only need to consider the
case $u_0$ is a smooth map from $S^1$ into $N$.

Let $u$ be the local smooth solution of (\ref{eq:1.15}) which exists
on the maximal time interval $[0,T)$. We only need to consider the
case where $T<\infty$.

From Lemma \ref{lm:3.1}, we know
that the energy is preserved by the solution $u$, i.e.
$$E_1(u(t))=E_1(u_0),\qquad \text{for any}\quad t\in [0,T).$$
Moreover, by the assumptions on $N$ given in the theorem and
Corollary \ref{lm:3.3} we know that $E_3$ is preserved, that is
$$E_3(u)=\int|\nabla_x u_x|^2dx-{1\over4}\int\langle u_x,R(u_x,Ju_x)Ju_x\rangle dx$$
is a constant $E_3(u_0)$. Thus we have
\begin{eqnarray}\label{eq:5.1}
||\nabla_x u_x||^2_{L^2}&=&E_3(u_0)+{1\over4}\int\langle u_x,R(u_x,Ju_x)Ju_x\rangle dx\nn\\
&\leqslant& E_3(u_0)+C(N)\int|u_x|^4 dx\nn\\
&\leqslant& C(N,E_1(u_0),E_3(u_0)),
\end{eqnarray}
where we used the interpolation inequality
\begin{eqnarray}\label{eq:5.2}
||u_x||^4_{L^4}&\leqslant&(||\nabla_x u_x||^2_{L^2}+||u_x||^2_{L^2})^{1\over2}||u_x||^3_{L^2}\nn\\
&\leqslant&{1\over 2}||\nabla_x u_x||^2_{L^2}+C(E_1(u_0)).\nn
\end{eqnarray}

Thus from Lemma \ref{lm:3.4}, we have that
$${dE_4\over dt}\leqslant C(N,E_1(u_0),E_3(u_0))\big(1+E_4\big).$$
By Gronwall inequality, we get that $E_4(u(t))$ is uniformly
bounded on $[0,T)$. Hence, we obtain
\begin{eqnarray}\label{eq:5.4}
2||\nabla_x^2u_x||^2_{L^2}&=&E_4(u)+3\int\langle\nabla_x u_x,R(\nabla_x u_x,u_x)u_x\rangle dx\nn\\
&&{}+5\int\langle\nabla_x u_x,R(\nabla_x u_x ,Ju_x)Ju_x\rangle dx\nn\\
&\leqslant& C(N,E_4(u_0))+C(N)||u_x||^2_{L^\infty}||\nabla_x u_x||^2_{L^2}.
\end{eqnarray}

In view of (\ref{eq:3.33}), (\ref{eq:5.1}) and the boundedness of $E_4$, we see
that $||\nabla_x^2u_x||_{L^2}$ is uniformly bounded on $[0,T)$. Hence we have
$$\sup_{t\in[0,T)} ||u_x||_{H^2}\leqslant C(N,E_1(u_0),E_3(u_0),E_4(u_0)).$$

It follows from the proof of Theorem \ref{thm:1.1} that for $m>2$
$$\sup_{t\in[0,T)} ||\nabla_x ^m u_x||_{L^2}\leqslant C(N,E_1(u_0), ||\nabla_x u_{0x}||_{L^2},
||\nabla_x^2 u_{0x}||_{L^2},\cdots,||\nabla_x^m u_{0x}||_{L^2}).$$

Thus, if $T$ is finite, we can find a time-local solution $u_1$ of
(\ref{eq:1.15}) and $u_1$ satisfies the initial value condition
$$u_1(x,T-\epsilon)=u(x,T-\epsilon),$$
where $0<\epsilon<T$ is a small number. Then by the local existence
theorem, $u_1$ exists on the time interval
$(T-\epsilon,T-\epsilon+\eta)$ for some constant $\eta>0$. The
uniform bounds on $||u_x||_{H^2}$ and $||\nabla_x^m u_x ||_{L^2}$
(for all $m>2$) implies that $\eta$ is independent of $\epsilon$.
Thus, by choosing $\epsilon$ sufficiently small, we have
$$T_1=T-\epsilon+\eta>T.$$
By the uniqueness result, we have that $u_1(x,t)=u(x,t)$ for all
$t\in[T-\epsilon,T_1)$. Thus we get a solution of the Cauchy problem
(\ref{eq:1.15}) on the time interval $[0,T_1)$,
which contradicts the maximality of $T$.\hspace*{\fill}$\Box$\\

\vspace{2.0cm}
\noindent{Xiaowei Sun}\\
School of Applied Mathematics,\\
Central University of Finance and Economics, \\
Beijing 100081, P.R. China.\\
Email: sunxw@cufe.edu.cn\\\\
Youde Wang\\
Academy of Mathematics and Systems Science\\
Chinese Academy of Sciences,\\
Beijing 100190, P.R. China.\\
Email: wyd@math.ac.cn

\end{document}